\documentclass[11pt]{amsart}
\usepackage{amssymb}
\usepackage{amsfonts}
\usepackage{amsmath}
\usepackage{graphicx}
\usepackage{xcolor}
\usepackage{amsmath}
\usepackage{mathrsfs}
\usepackage{stmaryrd}
\usepackage{epsfig,color}
\usepackage{blindtext}
\usepackage{enumerate}
\usepackage{hyperref}
\usepackage{url}
\usepackage{bbm}
\usepackage{nicefrac,mathtools}
\usepackage{bm}   
\usepackage{tabularx}
\DeclareGraphicsExtensions{.pdf,.jpeg,.png}
\usepackage{epstopdf}
\usepackage{cancel} %for editing
\usepackage[normalem]{ulem} %for editing
\usepackage{verbatim} %for comment
\usepackage{scalerel}
\usepackage{enumitem}
\usepackage{tikz-cd}%commutative diagram

\usepackage{color}
\usepackage[msc-links, lite]{amsrefs}
\usepackage{geometry}
\usepackage{stackengine,scalerel}

\newcommand\ttimes{\mathbin{\ThisStyle{\ensurestackMath{%
  \stackengine{-1\LMpt}{\SavedStyle\times}
  {\SavedStyle_{\hstretch{.9}{\mkern1mu\sim}}}{O}{c}{F}{T}{S}}}}}

\newtheorem{theorem}{Theorem}[section]

\newtheorem{problem}{Problem}

\newtheorem{proposition}[theorem]{Proposition}
\newtheorem{lemma}[theorem]{Lemma}

\newtheorem{claim}[]{Claim}

\theoremstyle{definition}
\newtheorem{definition}[theorem]{Definition}
\theoremstyle{remark}
\newtheorem{remark}[theorem]{Remark}

\numberwithin{equation}{section}

\newcommand{\dv}{\mathrm{div}}

\newcommand{\mf}{\mathbf}
\newcommand{\mb}{\mathbb}
\newcommand{\mc}{\mathcal}
\newcommand{\ms}{\mathscr}
\newcommand{\mk}{\mathfrak}
\newcommand{\mr}{\mathrm}
\newcommand{\oli}{\overline}

\newcommand{\wti}{\widetilde}
\newcommand{\res}{\scaleobj{1.75}{\llcorner}}

\newcommand{\Z}{\mathcal Z}
\newcommand{\A}{\mathcal A}
\newcommand{\C}{\mathcal C}

\newcommand{\Vol}{\mathrm{Vol}}
\newcommand{\Area}{\mathrm{Area}}
\newcommand{\Id}{\mathrm{Id}}

\newcommand{\rom}[1]{\expandafter\romannumeral #1}
\newcommand{\Rom}[1]{\uppercase\expandafter{\romannumeral #1}}

\DeclareMathOperator{\Ric}{Ric}

\DeclareMathOperator{\Diff}{Diff}
\DeclareMathOperator{\spt}{spt}
\DeclareMathOperator{\interior}{int}
\DeclareMathOperator{\closure}{Clos}

\DeclareMathOperator{\VC}{\mc V\C}

\title[Existence of three free boundary minimal disks]{Existence of free boundary minimal disks in convex regions}
\author{Lorenzo Sarnataro}
\address{Department of Mathematics, University of Toronto, 40 St George Street, Toronto,
ON M5S 2E4, Canada}
\email{lorenzo.sarnataro@utoronto.ca}

\author{Douglas Stryker}
\address{Department of Mathematics, Stanford University, Building 380, Stanford, CA 94305, USA}
\email{dstryker@stanford.edu}

\author{Zhichao Wang}
\address{Shanghai Center for Mathematical Science, 2005 Songhu Road, Fudan University, Shanghai, 200438, China}
\email{zhichao@fudan.edu.cn}

\author{Xin Zhou}
\address{Department of Mathematics, 531 Malott Hall, Cornell University, Ithaca, NY 14853, USA}
\email{xinzhou@cornell.edu}

\begin{document}

\begin{abstract}
We show that any three-ball with mean convex boundary contains an embedded free boundary minimal disk. Moreover, when the three-ball is a strictly convex domain with nonnegative Ricci curvature (for instance, a compact convex domain in Euclidean three-space), we prove the existence of at least three embedded free boundary minimal disks. Our approach is based on a multiplicity-one theorem for the free boundary Simon–Smith min–max theory.
\end{abstract}

\maketitle

\section{Introduction}

The existence of minimal surfaces with prescribed topology is a central theme in geometric analysis. In his celebrated 1982 problem section \cite{Yau82}, S. T. Yau posed the following question:

\begin{problem}[\cite{Yau82}*{Problem 89}]\label{problem:sphere}
Prove that there are four distinct embedded minimal spheres in any manifold diffeomorphic to $S^3$.
\end{problem}

Around the time Yau formulated this problem, Simon--Smith \cite{Smith82} proved the existence of at least one embedded minimal sphere in $S^3$ endowed with an arbitrary Riemannian metric. Their proof utilized a variant of the min-max theory for minimal hypersurfaces pioneered by Almgren \cites{Alm62,Alm65} and Pitts \cite{Pi} (see also Schoen-Simon \cite{SS} and Colding-De Lellis \cite{Colding-DeLellis03}). Subsequently, White \cite{Whi91} employed degree-theoretic methods to establish the existence of at least two embedded minimal spheres when the metric has positive Ricci curvature, and at least four when the metric is sufficiently close to the round metric. More recently, Haslhofer-Ketover \cite{HK19} proved the existence of at least two embedded minimal spheres for bumpy metrics by combining the Simon--Smith min-max theory with mean curvature flow, relying crucially on the catenoid estimate introduced by Ketover-Marques-Neves \cite{KMN16}.

We also note that branched immersed minimal spheres were obtained earlier by Sacks-Uhlenbeck \cite{Sacks-Uhlenbeck81} via min-max theory for harmonic maps (see also Colding-Minicozzi \cite{Colding-Minicozzi08b}). Recently, the last two authors \cite{Wang-Zhou23} of the present paper proved a multiplicity one theorem for the Simon--Smith min--max theory, resolving Yau's conjecture for $3$-spheres with positive Ricci curvature and for bumpy metrics.

Motivated by Yau's conjecture for $S^3$, it is natural to consider the following analogue for three-balls:

\begin{problem}\label{problem:disk}
Prove that there exist three distinct embedded free boundary minimal disks in any manifold diffeomorphic to $B^3$.
\end{problem}

Adapting the Simon--Smith min--max theory to the free boundary setting, Gr\"uter-Jost \cite{GrJo86} (see also Li \cite{Li-CPAM}) established the existence of a free boundary minimal disk in convex domains in $\mathbb{R}^3$. For immersed solutions, see the works of Struwe \cite{Str84}, Fraser \cite{Fra00}, Sun--Lin--Zhou \cite{SLZ20}, and Laurain-Petrides \cite{LP19}. Recently, Haslhofer-Ketover \cite{HK25} proved the existence of a second free boundary minimal disk in compact three-manifolds with convex boundary and nonnegative Ricci curvature. Furthermore, for generic metrics, they demonstrated the existence of three free boundary minimal disks in strictly convex three-manifolds with nonnegative Ricci curvature. We emphasize, however, that Problem \ref{problem:disk} remains open even for strictly convex domains in $\mathbb{R}^3$.

In this manuscript, we investigate the existence of free boundary minimal disks in compact three-manifolds with strictly mean convex boundary. Specifically, we study this problem under the assumption that the manifold admits no degenerate stable minimal spheres and no degenerate stable free boundary minimal disks (a condition satisfied, for instance, by strictly convex domains in $\mathbb{R}^3$).

\begin{theorem}\label{thm: main}
Let $(M^3,\partial M,g)$ be a Riemannian three-ball with strictly mean convex boundary such that there are no degenerate-and-stable minimal spheres or degenerate-and-stable free boundary minimal disks. Then there exist at least three distinct embedded submanifolds which are either minimal spheres or free boundary minimal disks. 

In particular, if $(M^3,g)$ is a strictly convex domain with nonnegative Ricci curvature (e.g., a compact convex ball in Euclidean three-space), then it admits at least three distinct embedded free boundary minimal disks.   
\end{theorem}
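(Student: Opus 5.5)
The plan is to adapt the strategy of \cite{Wang-Zhou23} for Yau's four-spheres problem to the free boundary setting. I will run a Lusternik--Schnirelmann argument over the space of embedded disks and spheres in $M$, using a three-parameter family of sweepouts, and appeal to a multiplicity-one theorem for free boundary Simon--Smith min--max.

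\textbf{The sweepout families.} I will realize the three families as follows. Isotope $M$ to the unit ball $B^3\subset\mathbb{R}^3$ and consider the planar slices $\{x\cdot v=t\}\cap B^3$ for $v\in S^2$ and $t\in[-1,1]$. These form a family parametrized by $\mathbb{RP}^2$-type data, that is, by the quotient of $S^2\times[-1,1]$ identifying $(v,t)$ with $(-v,-t)$. The slices degenerate to points at $t=\pm1$. Modulo the degenerate slices, this family is detected by the cohomology classes $\lambda,\lambda^2,\lambda^3$ of $\mathbb{RP}^3$, where $\lambda$ is the generator of $H^1(\cdot;\mathbb{Z}_2)$. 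Restricting to $p$-sweepouts for $p=1,2,3$ gives widths $\sigma_1\le\sigma_2\le\sigma_3$, computed in the free boundary Simon--Smith setting.

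\textbf{Realizing each width.} For each $p$, I will apply free boundary min--max with genus control, together with multiplicity one. The conclusion should be that $\sigma_p=\operatorname{Area}(\Sigma_p)$, where $\Sigma_p$ is a single embedded minimal sphere or free boundary minimal disk, counted with multiplicity one.

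\begin{itemize}
\item Strict mean convexity of $\partial M$ supplies the maximum principle barrier. It prevents interior surfaces from touching the boundary tangentially, and it rules out boundary-contained limits.
\item The assumption that there are no degenerate-and-stable spheres or disks is exactly what the multiplicity-one argument needs. Following \cite{Wang-Zhou23}, I would prove multiplicity one by a pull-tight argument that excludes stable components carrying higher multiplicity, with nondegeneracy allowing one to push off.
\item The genus and boundary bounds for Simon--Smith limits, adapted to the free boundary case as in \cite{GrJo86} and \cite{Li-CPAM}, restrict each component to a disk or a sphere. A sum of components is excluded by combining multiplicity one with an index bound of at most $p$ and a separation argument.
\end{itemize}

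\textbf{Distinctness.} If all three $\Sigma_p$ are distinct, the first assertion follows. Otherwise some $\sigma_p=\sigma_{p+1}$, or some surface is realized twice. In that case I will use the standard Lusternik--Schnirelmann argument. Suppose only finitely many such surfaces exist, say at most two. Then a neighborhood of the critical set in the space of cycles has trivial cohomology restriction for $\lambda$, since the set of minimal surfaces of area equal to $\sigma_p$ is finite. A $(p+1)$-sweepout can then be deformed, away from this neighborhood, to lie below $\sigma_{p+1}$, which is a contradiction. The index bound together with multiplicity one ensures that a single surface cannot account for two different $p$ values, once the two-sided disk or sphere is counted with multiplicity one.

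\textbf{The convex case.} If $M$ is strictly convex with $\mathrm{Ric}\ge0$, a Frankel-type or second-variation argument excludes closed minimal spheres. It also excludes stable free boundary disks: use the test function $1$, together with $\mathrm{Ric}\ge0$ and strictly positive second fundamental form of $\partial M$. So the hypotheses of the first part hold vacuously, and all three surfaces obtained are free boundary minimal disks.

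\textbf{Main obstacle.} I expect the hardest step to be the multiplicity-one theorem in the free boundary Simon--Smith setting. It requires controlling degenerate one-sided or multiplicity-two limits near $\partial M$, including half-catenoidal necks meeting the boundary. There the interior techniques of \cite{Wang-Zhou23} must be redone with free boundary regularity and a boundary maximum principle.
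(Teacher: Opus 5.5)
There is a genuine gap in your step ``Realizing each width.'' The multiplicity-one theorem available in the free boundary Simon--Smith setting (Theorem~\ref{thm:classical M1}) forces $m_i=1$ only on two-sided \emph{unstable} components. On a component of multiplicity $\ge 2$, the prescribed-mean-curvature approximation yields only a nonnegative \emph{supersolution} of the Jacobi equation (Proposition~\ref{prop:existence of supsolution}). Testing it against the first eigenfunction gives stability and nothing more.

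So a strictly stable sphere or a strictly stable free boundary disk can still appear with multiplicity $\ge 2$; nondegeneracy does not let you ``push it off.'' The limit can also be a disjoint union of several components. An index bound cannot exclude stable pieces, since they carry no index. Your ``separation argument'' is not available while stable spheres or stable disks may exist.

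Without knowing that each $\sigma_p$ is the area of a single surface counted once, the strict Lusternik--Schnirelmann inequalities do not yield three surfaces. Nothing you establish rules out, say, $\sigma_1=\Area(A)$, $\sigma_2=\Area(B)$, $\sigma_3=\Area(A)+\Area(B)$ with $A\cap B=\emptyset$. Nor does it rule out $\sigma_2,\sigma_3$ being integer multiples of the area of one strictly stable surface.

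The paper avoids this with a case analysis that your proposal is missing.
\begin{itemize}
\item First, mean curvature flow from $\partial M$ (White) shows that either $M$ contains no closed minimal surface, or it contains an embedded stable minimal sphere (Theorem~\ref{thm:always fbmd}).
\item If a stable sphere exists, it is strictly stable by hypothesis. Wang--Zhou's result for a ball bounded by a strictly stable sphere then gives two further unstable spheres inside it.
\item If there is no closed minimal surface but there is a stable free boundary disk $\Gamma$, then $\Gamma$ is strictly stable, separates $M$, and has a contracting neighborhood. The barrier min--max with Song's cylindrical ends (Theorem~\ref{thm:free boundary local min-max}) produces an unstable free boundary disk on each side.
\item Only in the remaining case (no closed minimal surfaces, no stable disks) is your direct argument run. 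Any two free boundary minimal disks must then intersect, since otherwise minimizing area between them yields a stable one. Every min--max component is an unstable two-sided disk, hence of multiplicity one. Pairwise disjointness forces a single component, and the LS inequalities give three distinct disks.
\end{itemize}

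So the nondegeneracy hypothesis is used to make stable surfaces strictly stable barriers, not to extend multiplicity one to stable components. Your treatment of the convex, $\Ric\ge 0$ case is fine and lands exactly in this last case.

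Separately, the paper does not get multiplicity one near $\partial M$ by redoing neck analysis at the boundary. It chooses the prescription $h$ supported in interior balls, so only interior PMC regularity is needed.
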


Recall that in the free boundary Simon--Smith min--max theory, the resulting minimal surface may be a closed embedded sphere, even within a strictly convex three-ball. As a consequence of Theorem \ref{thm: main}, we obtain the following structural dichotomy:

\begin{theorem}\label{thm:always one fbmd}
Let $(M^3,\partial M,g)$ be a Riemannian three-ball with strictly mean convex boundary. Then there exists an embedded free boundary minimal disk in $M$. Moreover, one of the following alternatives holds:
\begin{enumerate}
    \item $M$ contains no closed immersed minimal surface; or
    \item $M$ contains an embedded stable minimal sphere.
\end{enumerate}
\end{theorem}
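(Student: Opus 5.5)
The plan is to reduce Theorem~\ref{thm:always one fbmd} to Theorem~\ref{thm: main} by a case analysis on closed minimal surfaces in $M$, supplemented by a minimization argument when closed minimal surfaces exist.

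\textbf{Case A: $M$ contains no closed immersed minimal surface.} Then alternative (1) holds. In particular there are no minimal spheres, hence no degenerate-and-stable ones. Suppose there were a degenerate-and-stable free boundary minimal disk. Then we would perturb the metric slightly in the interior of $M$, keeping $\partial M$ strictly mean convex, so that the perturbed metrics are bumpy, and apply Theorem~\ref{thm: main} to them. This yields at least three surfaces, each a minimal sphere or a free boundary minimal disk. We then pass to the limit as the perturbation tends to zero, using curvature estimates (area is bounded by the min--max widths, and index is bounded). If a sequence of spheres survived in the limit, it would produce a closed minimal surface for $g$, contradicting Case A. Hence the limits include a free boundary minimal disk, and embeddedness is preserved under smooth convergence.

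\textbf{Case B: $M$ contains a closed immersed minimal surface $\Sigma$.} We want to produce a stable embedded minimal sphere. Since $M$ is a ball, any closed surface bounds a region, and we can minimize area in the isotopy class of the boundary of a small neighborhood of $\Sigma$ (or of $\partial M$). The strictly mean convex $\partial M$ acts as a barrier, so we would instead run Meeks--Simon--Yau minimization starting from a surface parallel to $\partial M$. Because $\partial M$ is mean convex, its area decreases inward. Since $\Sigma$ obstructs shrinking to a point (by the maximum principle, a family of shrinking spheres cannot sweep past a minimal surface without touching it), the minimization does not collapse. It therefore yields a nonempty stable embedded minimal surface, and in a ball the $\gamma$-reduction gives spheres, producing alternative (2).

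\textbf{Existence of a free boundary disk in Case B.} We use the stable sphere $S$: it bounds a ball $B\subset M$, and $M\setminus B$ has two boundary components, $S$ (which is minimal) and $\partial M$ (which is mean convex). Taking the outermost such stable sphere, the region between it and $\partial M$ contains no closed minimal surfaces, since any such surface would yield a further stable sphere by the same argument. We then run the free boundary Simon--Smith min--max in this region, with $S$ serving as a barrier. Its output is a disk, because outermost-ness rules out a sphere, and multiplicity one follows from the paper's theorem.

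\textbf{Main obstacle.} The main difficulty is the degenerate case: handling degeneracy via perturbation and ensuring that the limits are genuinely free boundary disks rather than spheres, or disks degenerating to boundary-touching configurations. As a first attempt, I would use the Case A/B dichotomy to exclude spheres in the limit.
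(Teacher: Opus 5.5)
\textbf{Case B has a genuine gap, and it is exactly where the paper uses mean curvature flow.}

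Meeks--Simon--Yau minimization in the isotopy class of a sphere parallel to $\partial M$ collapses. In a ball every embedded sphere is isotopic to an arbitrarily small one, and isotopies may pass straight through $\Sigma$. The maximum principle constrains minimal surfaces, or surfaces swept monotonically with signed mean curvature. It does not constrain the members of a minimizing sequence, so $\Sigma$ is no obstruction and the limit may be empty.

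What is missing is a monotone mean convex sweep. The paper runs mean curvature flow from $\partial M$. By avoidance, the flow can never cross a closed immersed minimal surface, so it cannot go extinct when one exists. White's theorem then gives convergence to embedded stable minimal spheres. This one step gives the dichotomy. It also supplies what the disk construction needs: the limit spheres have contracting neighborhoods, and the outer region $N$ (a ball minus finitely many balls) is foliated by mean convex surfaces, so $N\setminus\partial N$ contains no minimal sphere. You could instead confine your minimization to the component of $M\setminus\Sigma$ containing $\partial M$ and get non-collapse from homology. But then $\Sigma$ is only a weak, possibly immersed barrier, and the resulting spheres need not have the contracting neighborhoods used later.

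\textbf{Your construction of the disk in Case B also has holes.} Outermost stable spheres need not be unique. There may be several disjoint ones, so a single outermost $S$ does not leave $M\setminus B$ free of closed minimal surfaces: another stable sphere can sit beside $S$ without enclosing it. More seriously, a minimal sphere is only a degenerate barrier. Nothing you say prevents the min--max output in the region between $S$ and $\partial M$ from being $S$ itself or from touching $S$, and outermost-ness does not exclude this. You also give no sweepout with controlled width.

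The paper handles all of this in Lemma \ref{lem:existence of fbmd} via Song's cylindrical ends. It fills in balls, builds sweepouts from connected sums, and opens necks to get disk sweepouts with area bounds independent of $\epsilon$. The maximum principle for the neck slices, combined with monotonicity and the area bound, keeps the min--max disks out of the filled-in balls. So the limit lies in $N\setminus T$, where the foliation rules out spheres. Multiplicity one plays no role there.

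\textbf{Case A is overbuilt and, as written, circular.} A degenerate stable free boundary minimal disk is already the disk you want, so no perturbation is needed. Applying Theorem \ref{thm: main} to perturbed metrics invokes the present dichotomy, because the proof of Theorem \ref{thm:bumpy assumption} starts from it, and for the perturbed metrics you do not know that closed minimal surfaces are absent. A one-parameter disk min--max suffices: by the topological bound its output consists of disks and spheres, and here there are no spheres.
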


The primary analytical tool driving these results is a multiplicity one theorem for the Simon--Smith min--max theory in compact manifolds with boundary:

\begin{theorem}
\label{thm:classical multiplicity one with boundary}
Let $(M^3,\partial M,g)$ be a compact Riemannian three-manifold with smooth boundary. Suppose that $\Pi$ is a homotopy class of $(X, Z)$-sweepouts by $\Sigma_0$ with 
\[ \mf L(\Pi)>\sup_{x\in Z}\mc H^2\big(\Phi_0(x)\big)=0. \]
Then there exists a pairwise disjoint collection of connected, almost properly embedded, minimal surfaces $\{\Gamma_j\}_{j=1}^N$ with (possibly empty) free boundary, and positive integers $\{m_j\}_{j=1}^N$ so that 
\[  \mf L(\Pi)= \sum_{j=1}^N m_j\mc H^2 (\Gamma_j), \]	
and
\begin{enumerate}
    \item if $\Gamma_j$ is unstable and two-sided, then $m_j=1$;
    \item if $\Gamma_j$ is one-sided, then the connected double cover of $\Gamma_j$ is stable.
\end{enumerate}
Furthermore,  by Franz-Schulz \cite{Franz-Schulz-boundary-components}*{Theorem 1.8}, if $M$ is orientable and has mean convex boundary, then
\[
    \beta_1(\Gamma)\leq \beta_1(\Sigma_0),\quad   \sum_{i\in I_O}m_i\mk g(\Gamma_i)+\frac{1}{2}\sum_{i\in I_U}m_i(\mk g(\Gamma_i)-1)\leq \mk g(\Sigma_0),
\]
where $\beta_1(\Gamma)$ is the first Betti number of $\Gamma$, $\mk g(\Gamma)$ is the genus of $\Gamma$, and $I_O$ (resp. $I_U$) is the collection of $i$ such that $\Gamma_i$ is orientable (resp., non-orientable).
\end{theorem}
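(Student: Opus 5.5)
This statement is the free boundary analogue of the multiplicity one theorem of Wang--Zhou \cite{Wang-Zhou23}, so I will follow the strategy of that paper and adapt each step to the boundary. The plan is to run a \emph{prescribed mean curvature} (PMC) approximation inside the free boundary Simon--Smith setting.

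\textbf{Step 1: PMC min--max with free boundary.} Fix a two-sided sweepout structure. Since the sweepouts by $\Sigma_0$ bound regions $\Omega_x\subset M$, I can consider the functional
\[
\mc A^{\epsilon h}(\Omega)=\mc H^2(\partial\Omega\cap\interior M)-\epsilon\int_\Omega h,
\]
where $h$ is a generic smooth function on $M$. I would then show that the $\mc A^{\epsilon h}$ min--max width in $\Pi$ is achieved by an almost properly embedded surface of mean curvature $\epsilon h$ meeting $\partial M$ orthogonally, with genus control. This needs three ingredients in the Simon--Smith (isotopy) category with boundary: pull-tight, almost minimizing in annuli (including half-annuli centred at boundary points), and regularity. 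For regularity I would combine the free boundary Simon--Smith regularity (Gr\"uter--Jost, Li) with the closed PMC Simon--Smith regularity from \cite{Wang-Zhou23}. The hypothesis $\sup_Z \mc H^2=0$ lets the $\mc A^{\epsilon h}$-width stay strictly above its value on $Z$ for small $\epsilon$.

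\textbf{Step 2: multiplicity one and passing to the limit.} For generic $h$, the $\epsilon h$-PMC surfaces $\Sigma_\epsilon$ have multiplicity one as varifolds, with bounded genus and area. As $\epsilon\to0$ they converge to a stationary free boundary varifold whose support is $\bigcup\Gamma_j$. I then need to show that every unstable two-sided component has $m_j=1$, and this is the main obstacle. The approach from \cite{Wang-Zhou23} is as follows. If $m_j\ge2$, the sheets of $\Sigma_\epsilon$ near $\Gamma_j$ converge with multiplicity at least two. Their normalized height differences converge to a positive Jacobi field on $\Gamma_j$, which in the free boundary setting satisfies the Robin condition $\partial_\nu\phi=\mr{II}_{\partial M}(\mf n,\mf n)\phi$, but with an inhomogeneous term coming from $h$. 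A positive solution forces $\Gamma_j$ to be stable, or yields a contradiction with the genericity of $h$. The new work at the boundary is to prove curvature estimates for stable free boundary PMC sheets up to $\partial M$, and to control the sheet ordering near the boundary curve. Both follow by reflecting across $\partial M$ in Fermi coordinates, as in Guang--Li--Wang--Zhou. When $\Gamma_j$ is one-sided, the same argument applied on the double cover gives a positive Jacobi field there, so the double cover is stable.

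\textbf{Step 3: equality of widths and topology.} I would show $\mf L^{\epsilon h}(\Pi)\to\mf L(\Pi)$, which follows from the continuity of the min--max width in $\epsilon$. This gives the area identity $\mf L(\Pi)=\sum_j m_j\mc H^2(\Gamma_j)$, together with the pairwise disjointness and the almost proper embeddedness inherited from the limit. The genus and Betti number bounds are quoted directly from Franz--Schulz \cite{Franz-Schulz-boundary-components}*{Theorem 1.8}, applied to the min--max sequence; this is where orientability of $M$ and mean convexity of $\partial M$ are used.
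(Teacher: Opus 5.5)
\textbf{There is a genuine gap in Step 2, and it is the heart of the theorem.} Multiplicity $m\ge 2$ convergence does not give a positive Jacobi field. The sheets of an $\varepsilon h$-boundary are only $C^{1,1}$ and may touch, so the renormalized top-minus-bottom difference converges only to a nonnegative $\varphi\in W^{1,2}(\Gamma)$ with
\[
\int_{\Gamma}\langle\nabla\varphi,\nabla f\rangle-\big(\Ric(\nu,\nu)+|A^{\Gamma}|^2\big)\varphi f-\int_{\partial\Gamma}A^{\partial M}(\nu,\nu)\varphi f\;\ge\;\int_{\Gamma}2c\,hf\qquad(f\ge 0),
\]
where $c\ge 0$ is unknown (Proposition~\ref{prop:existence of supsolution}). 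Testing with the first eigenfunction $\phi_1>0$ gives $\lambda_1\int\varphi\phi_1\ge 2c\int_\Gamma h\phi_1$. This says nothing when $c>0$ and $\int_\Gamma h\phi_1<0$, and there is no ``genericity of $h$'' contradiction to fall back on.

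The missing idea is that $h$ must be \emph{designed} for the possible limits rather than chosen generically:
\begin{enumerate}
\item First reduce to a strongly bumpy metric. By Theorem~\ref{thm:compactness and jacobi fields} (Property $\mathbf{(R')}$, plus degeneracy of limits approached by distinct surfaces), the free boundary minimal surfaces with area $\le\mf L(\Pi)+1$ and Property $\mathbf{(R')}$ then form a finite list $S_1,\dots,S_\alpha$.
\item Choose a single $h$ that takes both signs on every $S_i$ and satisfies $\int_{S_i}h\phi_i=0$ (on the double cover if $S_i$ is one-sided).
\item The min--max limits for $\varepsilon_kh$ lie in this list, so the test above gives $\lambda_1(\Gamma_i)\ge 0$.
\item Finally, remove the bumpy assumption by approximating $g$ by bumpy metrics and applying compactness again.
\end{enumerate}
Your proposal contains none of these steps.

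\textbf{Your boundary analysis relies on tools that are not available.} Step 1, with $h\ne 0$ at $\partial M$, needs a free boundary PMC Simon--Smith regularity theory. Step 2 needs curvature estimates and sheet ordering for PMC sheets up to $\partial M$ via reflection. Neither exists, and ``combining'' Gr\"uter--Jost/Li with Wang--Zhou is not a proof.

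The paper's key observation is that none of this is needed. The $h$ above is supported in small balls around interior points $p_i,q_i\in S_i$, so $h\equiv 0$ near $\partial M$. As a result:
\begin{itemize}
\item interior regularity is exactly Wang--Zhou's, and boundary regularity is the minimal-surface one (Gr\"uter--Jost, Li, Li--Zhou);
\item near $\partial M$ the sheets are genuine free boundary minimal graphs, so Ambrozio--Carlotto--Sharp gives a positive Jacobi field with the Robin condition there;
\item this boundary field is glued to the interior supersolution by comparing the normalizations $\|\varphi_k\|_{L^2(U_I^k)}$ and $\varphi_k(z)$, which is the three-subcase argument in Proposition~\ref{prop:existence of supsolution}.
\end{itemize}

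\textbf{The regions cannot be fixed for a classical sweepout.} ``Fix a two-sided sweepout structure'' is not available for a sweepout in $\overline{\ms X}(\Sigma_0)$. When $\Phi^*\bar\lambda\neq 0$ (e.g.\ the $\mathbb{RP}^2$-families of planar disks), the regions $\Omega_x$ cannot be chosen continuously. The paper proves the relative version with regions (Theorem~\ref{thm:rel_mult_one}) and then deduces the classical statement by the double-cover lifting argument of Wang--Zhou~\S7.2.
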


To produce free boundary minimal surfaces via the Simon--Smith min-max theory, a natural approach is to consider isotopies of the ambient manifold in the tightening process and to establish the almost minimizing property. While this approach works well for (mean) convex boundaries, it encounters intrinsic obstacles in more general settings. In \cite{Li-CPAM}, M. Li introduced outer variations and developed a variant of the Simon--Smith min--max theory to produce free boundary minimal surfaces; later on, the full regularity was established by Li and the last-named author in the general settings \cite{LZ16} (although the Li-Zhou paper is in the Almgren-Pitts setting, the specific regularity theory goes through unchanged in the Simon-Smith setting). Our multiplicity one theorem applies to both settings (mean convex cases and general cases).

\subsection*{Overview of strategy}

In general, Simon--Smith min--max for sweepouts by disks may produce closed minimal spheres in the interior. To ensure the existence of a free boundary minimal disk in any strictly mean convex three-ball $(M, g)$, we argue as follows. First, we run the mean curvature flow starting from $\partial M$. By a result of White \cite{Whi00}, we find a compact subset $\Omega \subset M$ containing $\partial M$ whose interior admits a (possibly singular) foliation by strictly mean convex hypersurfaces, and so that $\partial \Omega \cap \mathrm{Int}(M)$ is either empty or consists of stable embedded minimal spheres with contracting neighborhoods. We then apply the cylindrical manifold construction of Song \cite{Song18} at these stable spheres and show that the min--max width for sweepouts by disks in the cylindrical manifold is bounded. As a consequence, we produce $\Sigma$ either a free boundary minimal disk in $\Omega \setminus (\partial \Omega \cap \mathrm{Int}(M))$ or a minimal sphere in $\mathrm{Int}(\Omega)$. By the maximum principle applied to the strictly mean convex foliation, $\Sigma$ cannot be closed, so $\Sigma$ must be a free boundary minimal disk.

To produce more free boundary minimal disks (or minimal spheres), we argue similarly to \cite{Wang-Zhou23}*{Theorem A}, which in our setting requires a multiplicity one result for free boundary Simon--Smith min--max. Our proof of multiplicity one follows the strategy of \cite{Wang-Zhou23}*{Theorem B}. To briefly outline the essential idea, we first solve a sequence of min--max problems for the prescribed mean curvature functional for carefully chosen prescription functions tending to zero. Under the assumption of convergence with higher multiplicity in the limit, we extract a nonnegative supersolution to an equation involving the Jacobi operator, which forces the limit to be stable. The key new observation of this article is that the prescription functions can be chosen to have support strictly in the complement of a neighborhood of the ambient manifold's boundary. Consequently, the interior regularity theory for nonzero prescribed mean curvature developed by the authors \cites{SS23, Wang-Zhou-C11-2023, Wang-Zhou23} can be applied directly, circumventing the intensive technical task of developing the corresponding regularity theory at the boundary.

\subsection*{Organization}

The paper is organized as follows. In Section \ref{sec:pre}, we introduce the necessary notation and review the regularity theory for free $h$-boundary surfaces. In Section \ref{sec:multi 1}, we prove the multiplicity one theorem for the free boundary Simon--Smith min-max theory. Section \ref{sec:disks} is devoted to establishing the existence of three free boundary minimal disks in mean convex three-manifolds with nonnegative Ricci curvature, alongside the dichotomy result for the general case. Finally, in Appendix \ref{appen:barries}, we recall the construction of free boundary minimal surfaces in the presence of a stable free boundary minimal surface serving as a barrier, and in Appendix \ref{appen:LS inequality}, we provide a detailed proof of the Lusternik--Schnirelmann inequality in our setting.

\subsection*{Acknowledgments}
D.S. was supported by the NSF grant DMS-2503279.
Z.W. is supported by Tianyuan Mathematics Frontier Key Special Program (NSFC Grant No. 12526203).
X.Z. acknowledges the support by NSF grants DMS-1945178, DMS-2506717 and a grant from the Simons Foundation.

\section{Preliminaries}\label{sec:pre}

\subsection{Notations}
We will not restrict the ambient manifold to a three-ball until the final section. 
\begin{itemize}
    \item $M^3$ denotes a compact, oriented, three-dimensional Riemannian manifold with smooth boundary $\partial M$, and $U\subseteq M$ denotes a relatively open subset ($U$ may be equal to $M$).
    \item $\mr{An}(p; s, r)$ denotes an open annulus given by $B(p, r)\setminus \closure(B(p, s))$ for $p\in M$ and $0<s<r$. 
    \item $h\in C^{\infty}(M)$ denotes a smooth function on $M$, which will play the role as a prescribing function mean curvature.
    \item $\mathcal{C}(M)$ or $\mathcal{C}(U)$ denotes the space of sets $\Omega\subset M$ or $\Omega\subset U\subset M$ with finite perimeter (Caccioppoli sets); see \cite{Si}*{\S 14}.
    \item $\mc V(M)$ or $\mc V(U)$ denotes the space of $2$-varifolds in $M$ or $U$.
    \item $\mathfrak X(U)$ denotes the space of compactly supported smooth vector fields in $U$.
    \item $\Diff_0(M)$ denotes the connected component of the diffeomorphism group of $M$ containing the identity, and $\mk{Is}(U)$ denotes the set of isotopies of $M$ supported in $U$.
    \item A collection of connected $C^1$-embedded surfaces $\{\Gamma^i\}_{i=1}^\ell\subset U$ with $\partial\Gamma^i\cap U=\emptyset$ is said to be \emph{ordered}, denoted by 
    \[\Gamma^1\leq \cdots\leq \Gamma^\ell,\] 
    if for each $i$, $\Gamma^i$ separates $U$ into two connected components $U^i_+$ and $U^i_-$ (i.e., $U\setminus \Gamma^i = U^i_+\sqcup U^i_-$), such that $\Gamma^j\subset \closure(U^i_-)$ for $j=1, \cdots, i-1$, and $\Gamma^j\subset \closure(U^i_+)$ for $j=i+1, \cdots, \ell$.
\end{itemize}

\subsection{Free boundary min-max $h$-surfaces}
We recall the free boundary min--max theory for $h$-surfaces in dimension three established by the last two authors in \cite{Wang-Zhou23}*{Section 8.1}. For the purposes of this paper, we adapt this theory by employing isotopies of a mixed nature: they preserve the mean convex boundary components while allowing points to be pushed out through the remaining boundary components.

Let $(M, \partial M, g)$ be a compact Riemannian three-manifold with smooth boundary. Denote by
\begin{itemize}
    \item $\partial_{mc} M$ the union of boundary components that are mean convex with respect to the outward unit normal of $\partial M$, and
    \item $\partial_{other} M$ the union of the remaining boundary components.
\end{itemize}
%Note that $\partial_{other} M$ may also contain some mean convex subdomains. 
We write
\[
\partial M = \partial_{mc} M \sqcup \partial_{other} M.
\]

Assume that $(M^3, \partial M, g)$ is isometrically embedded as a submanifold of another compact Riemannian three-manifold with boundary $(\widetilde M^3, \partial \widetilde M, g)$, such that
\[
\partial \widetilde M = \partial_{mc} M.
\]
In other words, $\widetilde M$ is obtained from $M$ by filling in the boundary components $\partial_{other} M$.

Let $h: M \to \mathbb{R}$ be a fixed smooth function satisfying
\[
h = 0 \quad \text{in a neighborhood of } \partial M.
\]
Fix a compact surface $\Sigma_0$ of genus $\mathfrak{g}_0$ with $k$ boundary components. We say that a smooth map 
\[
f: (\Sigma_0, \partial \Sigma_0) \to (\widetilde M, \partial \widetilde M)
\]
is a \emph{proper embedding} if $f|_{\interior (\Sigma_0)}$ and $f|_{\partial \Sigma_0}$ are embeddings into $\interior (\widetilde M)$ and $\partial \widetilde M$, respectively. We will often identify an embedding with its image and write $(\Sigma, \partial \Sigma) = \big(f(\Sigma_0), f(\partial \Sigma_0)\big)$.

Similarly as in \cite{Wang-Zhou23}*{Equation (2.1)}, we define
\[ \ms E = \left\{(\Sigma, \Omega): \text{ $\Sigma$ is a proper separating  embedding of $\Sigma_0$ which bounds $\Omega$ in $\widetilde M$}\right\}, \]
i.e. the set of properly embedded separating surfaces of genus $\mk g_0$ with $k$ boundary components in $\widetilde M$. We endow $\ms E$ with the smooth topology.

As done in \cite{Wang-Zhou23}*{Section 8.1} (see also \cite{Li-CPAM}), to produce free boundary min-max solutions, we only count area and volume for a pair $(\Sigma, \Omega)$ restricted to $M$. To be precise, given $(V,\Omega)\in \VC(\widetilde M)$ \cite{Wang-Zhou23}*{Definition 1.3}, we define 
\begin{equation}\label{eq:localized A^h}
\mc A^h_M(V,\Omega)= \|V\|(M)-\int_{\Omega\cap M} h\,\mr d \Vol.     
\end{equation}

Denote by $\mk{Is}$ the space of all boundary preserving isotopies on $\widetilde M$. Define 
\[\mk{Is}^{\mr{out}}:=\big\{\{\varphi_s\}\in \mk{Is}: M\subset \varphi_s(M) \text{ for all } s\in[0,1] \big\}\]
to be the isotopies in $\widetilde M$ that can push points out of the compact set $M$, but not into $M$; see also \cite{Li-CPAM}. Given an open subset $U\subset \widetilde M$, we define $\mk{Is}^{\mr{out}}(U)$ to be those in $\mk{Is}^{\mr{out}}$ that are supported in $U$.

Let $X\subset I(m, k_0)$ be a cubical complex and let $Z\subset X$ be a subcomplex. A continuous map $\psi: X\to C^\infty(\widetilde M, \widetilde M)$ is said to be \textit{an outward isotopic deformation}, if for each $x\in X$, there exists an outward isotopy $\{\varphi_{x, s}\}_{s\in[0,1]}\in \mk{Is}^{\mr {out}}$ such that $\psi_x := \psi(x)$ is equal to $\varphi_{x, 1}$.  Let $\Phi_0:X\to \ms E$ be a continuous map. A family $\bm \Xi$ of $(X,Z)$-sweepouts homotopic to $\Phi_0$ relative to $\Phi_0|_Z$ is said to be \textit{saturated}, if for any $\Phi\in \bm \Xi$, and any outward isotopic deformation $\psi: X\to C^\infty(\widetilde M, \widetilde M)$ with $\psi|_Z = \Id$,
\[ \Phi'(x) := (\psi_x)_\#\Phi(x) \quad \text{also belongs to $\bm \Xi$.} \]

For any saturated family $\bm\Xi$ of $(X,Z)$-sweepouts, we define
\begin{equation*}
\mf L^h_M(\bm\Xi)=\inf_{\Phi\in \bm\Xi}\max_{x\in X}\mc A^h_M \big(\Phi(x)\big).
\end{equation*}

Recall that a $C^{1,1}$ almost embedded surface $(\Sigma, \partial\Sigma)\subset (M, \partial M)$ is called a \textit{boundary} if there exists $\Omega\in \C(M)$, such that $\Sigma\res \interior(M) = \partial\Omega\res\interior(M)$\footnote{Since $h=0$ near $\partial M$, we only need this boundary structure in the interior $\interior(M)$.}, and a pair $(\Sigma, \Omega)$ is called a \textit{free $h$-boundary} if for any vector field $X\in \mk X(M, \Sigma)$\footnote{This means $X(q)\in T_q(\partial M)$ for all $q$ in a neighborhood of $\partial \Sigma$ in $\partial M$.} (see \cite{LZ16}*{page 501}), we have $\delta \A^h_M|_{\Sigma, \Omega}(X) = 0$. By the first variation formula, $\Sigma$ must meet $\partial M$ orthogonally along $\partial\Sigma$, and $\interior(\Sigma)$ is minimal near $\partial M$ because $h$ is assumed to vanish in a neighborhood of $\partial M$; see \cite{LZ16}*{Definition 2.8}. 

For the standard notions of ``{\it minimizing sequence},'' ``{\it strongly $\mc A^h$-stationary}'', ``{\it critical set},'' ``{\it min-max subsequence},'' ``{\it $(\mc A^h,\epsilon_j,\delta_j)$-almost minimizing},'' ``{\it  $L(m)$-admissible collection of annuli},'' we refer to \cite{Wang-Zhou23}. 

The notion of \textit{$(\A^h, \epsilon, \delta)$-outer almost minimizing in $U$} is specific to the free boundary setting, and is recorded in the following definition.

\begin{definition}\label{def:ep pmc almost minimizing}
Given $\epsilon, \delta >0$, an open set $U\subset \wti M$, and an embedded separating surface $(\Sigma, \Omega)\in \ms E$, we say that {\em $(\Sigma, \Omega)$ is $(\A^h, \epsilon, \delta)$-outer almost minimizing in $U$} if there does not exist any isotopy $\psi\in \mk{Is}^{out}(U)$, such that
\[ \A^h(\psi(t, \Sigma, \Omega)) \leq \A^h(\Sigma, \Omega) + \delta \quad \text{ for all } t\in[0,1];\]
\[  \A^h(\psi(1, \Sigma, \Omega)) \leq \A^h(\Sigma, \Omega) - \epsilon. \]
\end{definition}

We are ready to recall the min-max theorem for free boundary $h$-surfaces.

\begin{theorem}\label{thm:relative min-max in any compact domain} 
Let $\bm\Xi$ be a saturated family of $(X,Z)$-sweepouts relative to $\Phi_0|_Z$. Suppose 
\begin{equation*}
\mf L^h_M(\bm \Xi)>\max \left\{\max_{x\in Z}\mc A^h_M\big(\Phi_0(x)\big), 0\right\}.
\end{equation*}
Then there exist an integer $L=L(m)$ (depending only on the dimension of the large complex $I(m, k_0)$ into which $X$ is embedded), a minimizing sequence $\{\Phi_i\}_{i\in\mb N}\subset \bm \Xi$, and a strongly $\mc A^h$-stationary, $C^{1,1}$, free $h$-boundary $(\Sigma,\Omega)$ lying in the critical set $\mf C(\{\Phi_i\})$, that is, $(\Sigma,\Omega)$ is the $\ms F$-limit of some min-max subsequence $\{\Phi_{i_j}(x_j)\res M\}$, and 
\[ \mc A^h_M(\Sigma,\Omega)=\mf L^h_M(\bm\Xi). \]
Moreover, the min-max sequence $\{\Phi_{i_j}(x_j)\}_{j\in\mb N}$ can be chosen so that there exist $\epsilon_j,\delta_j\to 0$ such that $\Phi_{i_j}(x_j)$ is $(\mc A^h,\epsilon_j,\delta_j)$-outer almost minimizing in any $L(m)$-admissible collection of annuli $\mr{An}(p;s_1,r_1),\cdots, \mr{An}(p;s_L,r_L)\subset\wti M$. 
\end{theorem}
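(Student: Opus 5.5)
The proof combines the interior min--max machinery for $h$-surfaces in \cite{Wang-Zhou23} with the outer-variation framework of Li \cite{Li-CPAM} and Li--Zhou \cite{LZ16}. The aim is to produce a min--max sequence that is almost minimizing with respect to outward isotopies, and then to run regularity separately in the interior and near $\partial M$.

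\textbf{Step 1: tightening.} Starting from a minimizing sequence in the saturated family $\bm\Xi$, apply the pull-tight procedure using only outward isotopies in $\mk{Is}^{\mr{out}}$. The functional $\mc A^h_M$ only measures area and volume inside $M$. Pushing mass out through $\partial_{other}M$ therefore never increases it, while the isotopies preserve $\partial_{mc}M = \partial\wti M$. Saturation guarantees that the deformed sweepouts stay in $\bm\Xi$. The standard continuous tightening map on $\VC(\wti M)$ restricted to $M$ then yields the following: every element of the critical set is strongly $\mc A^h$-stationary for vector fields in $\mk X(M,\Sigma)$, in the free boundary sense. The hypothesis $\mf L^h_M>\max\{\max_Z\mc A^h_M,0\}$ keeps the critical set away from $\Phi_0(Z)$ and from trivial limits.

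\textbf{Step 2: almost minimizing.} Adapt the combinatorial argument of Pitts, as in \cite{Wang-Zhou23} and \cite{Colding-DeLellis03}, with $L(m)$-admissible collections of annuli. Suppose no min--max subsequence were $(\mc A^h,\epsilon_j,\delta_j)$-outer almost minimizing in some admissible collection. Then for each $x$ one can find an outward isotopy in one annulus that decreases $\mc A^h$ by $\epsilon$ while never increasing it by more than $\delta$. Since $L$ depends only on $m$, these isotopies can be glued over the cubical complex $X$ into a single outward isotopic deformation. This produces a competitor in $\bm\Xi$ with strictly smaller maximum, a contradiction. Throughout, volume enters only through $\int_{\Omega\cap M}h$, which is Lipschitz in the isotopy, so the $h$-term is harmless.

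\textbf{Step 3: regularity.} Take the $\ms F$-limit $(\Sigma,\Omega)$ of the almost minimizing subsequence. In $\interior(M)$, construct replacements by minimizing $\mc A^h$ among isotopic competitors in small annuli, and apply the $C^{1,1}$ regularity of \cite{Wang-Zhou23} and \cite{Wang-Zhou-C11-2023} for $h$-surfaces. Near $\partial M$ we have $h\equiv0$, so the problem reduces to minimal surfaces. There, near $\partial_{other}M$, the outer almost minimizing property together with Li--Zhou \cite{LZ16} gives smooth free boundary minimal surfaces, and near $\partial_{mc}M$ one uses the Gr\"uter--Jost and Li \cite{Li-CPAM} regularity for boundary-preserving isotopies. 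Patching interior and boundary replacements yields a $C^{1,1}$ free $h$-boundary with $\mc A^h_M(\Sigma,\Omega)=\mf L^h_M(\bm\Xi)$.

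\textbf{Main obstacle.} The delicate step is the replacement and unique continuation argument on annuli that meet $\partial M$. There one must check that outer isotopies of $\wti M$ suffice to produce replacements whose limits are regular free boundary surfaces. Because $h=0$ near $\partial M$, I expect to quote the minimal case of \cite{LZ16} essentially verbatim. The main check is that the presence of the volume term elsewhere does not spoil the compactness of replacements.
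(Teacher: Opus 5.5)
Your proposal is correct and follows essentially the same route as the paper. The paper likewise adapts the tightening with outward isotopies and the combinatorial argument of \cite{Wang-Zhou23}, takes interior $C^{1,1}$ regularity from \cite{Wang-Zhou23}, and reduces boundary regularity to the minimal case because $h\equiv 0$ near $\partial M$: Gr\"uter--Jost \cite{GrJo86} at $\partial_{mc}M$, and \cite{Li-CPAM} together with \cite{LZ16} at $\partial_{other}M$.

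One small imprecision: an outward isotopy can increase $\mc A^h_M$ by stretching the part of the surface inside $M$. What the outward condition actually provides is $\varphi_s^{-1}(M)\subset M$, so no mass is pulled into $M$ from outside.
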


\begin{proof}
By adapting the tightening process from \cite{Wang-Zhou23}*{\S 2.2} and the combinatorial argument from \cite{Wang-Zhou23}*{\S 3.2} to the current free boundary setting, we obtain a stationary pair $(V, \Omega)\in \VC(M)$ which is free boundary stationary and satisfies the outer almost minimizing property.

The interior regularity was established by the last two authors in \cite{Wang-Zhou23}*{Theorem 8.1}, building on the $C^{1,1}$ regularity theory for minimizing $h$-surfaces developed by the first two authors in \cite{SS23} and by the last two authors in \cite{Wang-Zhou-C11-2023}. Note that $h=0$ in a neighborhood of $\partial M$. Hence the boundary regularity on the mean convex boundary $\partial \wti M$ follows from \cite{GrJo86}. The boundary regularity on $\partial_{other}M$ was established in \cite{Li-CPAM} and \cite{LZ16}. The last statement follows from the proof of \cite{Wang-Zhou23}*{Lemma 3.10}. 
\end{proof}

\begin{remark}\label{rem:stable} \leavevmode
\begin{itemize}
    \item By Theorem \ref{thm:relative min-max in any compact domain}, for any $L(m)$-admissible collection of annuli in $\widetilde M$, the surface $\Sigma$ is stable in at least one of them.
    \item When $\widetilde M = M$ (i.e. when all components of $\partial M$ are mean convex) and $h = 0$, the min--max theory in Theorem \ref{thm:relative min-max in any compact domain} reduces to that of \cite{GrJo86}.
    \item When $\widetilde M$ is closed (i.e. when no components of $\partial M$ are mean convex) and $h = 0$, the min--max theory in Theorem \ref{thm:relative min-max in any compact domain} coincides with that in \cite{Li-CPAM}.
\end{itemize}
\end{remark}

\begin{comment}
\subsection{Passing to limit and topological bound}
\begin{theorem}[\citelist{\cite{Li-CPAM}*{Theorem 9.1}\cite{Franz-Schulz-boundary-components}*{Theorem 1.8}}]\label{thm:topology bound}
With the same notations as in Theorem \ref{thm:classical multiplicity one with boundary}. Denote by $I_O$ (resp. $I_U$) the collection of $i$ such that $\Gamma_i$ is orientable (resp. non-orientable).
If $M$ orientable and $\partial M$ is strictly mean convex w.r.t. $g$,
then $\Gamma_i$ is orientable and
\begin{equation}\label{eq:genus bound1}
   \beta_1(\Gamma)\leq \beta_1(\Sigma_0),\quad   \sum_{i\in I_O}m_i\mk g(\Gamma_i)+\frac{1}{2}\sum_{i\in I_U}m_i(\mk g(\Gamma_i)-1)\leq \mk g(\Sigma_0),
\end{equation}
where $\mk g_0$ is the genus of $\Sigma_0$; $\beta_1(\Gamma')$ is the first Betti number of $\Gamma'$ for all surface-with-boundary $\Gamma'$. In particular, if $M$ is ball and $\Sigma$ is a disk, then each $\Gamma_i$ is either a disk or a sphere.
\end{theorem}

\end{comment}

\section{Multiplicity one theorem for free boundary Simon--Smith min-max theory}\label{sec:multi 1}
In this section, we prove the multiplicity one theorem for the free boundary Simon--Smith min-max theory, following the approach of \cite{Wang-Zhou23}*{\S6--7}. As a first step, we establish the convergence of free $\varepsilon h$-surfaces as $\varepsilon \to 0$. The key observation is that if a sequence of free $\varepsilon h$-surfaces converges to a free boundary minimal surface with multiplicity greater than one as $\varepsilon \to 0$, then the limiting free boundary minimal surface admits a nonnegative supersolution to the Jacobi equation. We choose a special function $h$ satisfying $h \equiv 0$ in a neighborhood of $\partial M$, so that the regularity theory in Theorem \ref{thm:relative min-max in any compact domain} applies.

\subsection{Existence of a supersolution} We extract a supersolution to the Jacobi equation under the assumption of convergence with multiplicity greater than one.
\begin{proposition} \label{prop:existence of supsolution}
Let $\Sigma \subset M$ be a properly embedded, two-sided, free boundary minimal surface. Let $h \in C^{\infty}(M)$ so that $h \equiv 0$ in a neighborhood of $\partial M$ and $h$ changes sign along $\Sigma \setminus \partial M$. Let $\{(\Sigma_k, \Omega_k)\}_{k\in\mb N}$ be a sequence of strongly $\mc A^{ \varepsilon_kh}$-stationary, $C^{1, 1}$, free $ \varepsilon_k h$-boundaries in $(M,g)$ with $\varepsilon_k\to 0$ as $k\to 0$. Suppose that $\Sigma_k$ converges as varifolds to $\Sigma$ with multiplicity $m\geq 2$. Suppose in addition that the convergence is $C^{1, 1}_{loc}$ away from a finite set $\mc Y$. Then $\Sigma$ admits a nonnegative function $\varphi\in W^{1,2}(\Sigma)$ with $\|\varphi\|_{L^2(\Sigma)}=1$ and a constant $c\geq 0$ satisfying 
\begin{equation}\label{eq:full ineq on Sigma} 	\int_{\Sigma}\langle\nabla\varphi,\nabla f\rangle - \big(\Ric(\nu,\nu) + |A^{\Sigma}|^2\big)\varphi f\,\mr d\mc H^2 -\int_{\partial \Sigma} A^{\partial M}(\nu,\nu)\varphi f\,\mr d\mc H^1  \geq \int_{\Sigma}2c hf\,\mr d\mc H^2, 
\end{equation}
for all  $f\in C^1(\Sigma) \text{ and } f\geq 0$.
Here $c=0$ if $m\geq 3$ is odd.
\end{proposition}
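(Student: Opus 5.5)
We follow the strategy of \cite{Wang-Zhou23}*{\S6}, adapted to the free boundary. Away from $\mc Y$, the $C^{1,1}_{loc}$ convergence with multiplicity $m$ lets us write $\Sigma_k$ locally, over compact subsets $K\subset\Sigma\setminus\mc Y$ (including pieces touching $\partial\Sigma$), as ordered graphs $u_k^1\le\cdots\le u_k^m$ of functions on $K$ in Fermi coordinates along $\nu$. Near $\partial\Sigma$ we use Fermi coordinates adapted to $\partial M$: the normal exponential map is modified so that points of $\partial\Sigma$ move inside $\partial M$. Then the graphs meet $\partial M$ orthogonally, and this orthogonality becomes a Neumann-type condition on the $u_k^i$. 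Because $h\equiv0$ near $\partial M$, each sheet is minimal there. Interior regularity of the $\varepsilon_k h$-sheets comes from Theorem \ref{thm:relative min-max in any compact domain}, and the free boundary condition from \cite{GrJo86}/\cite{LZ16}.

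\textbf{Step 1: the linearized inequality.} Set $w_k=u_k^m-u_k^1\ge0$ and normalize $\varphi_k=w_k/\|w_k\|_{L^2(K)}$. The top and bottom sheets have mean curvatures $\pm\varepsilon_k h$. The sign depends on which side $\Omega_k$ lies, since adjacent sheets alternate orientation as boundaries of $\Omega_k$. Subtracting the two mean curvature equations gives a uniformly elliptic linear equation
\[ L_k w_k = \varepsilon_k(\sigma_k^m+\sigma_k^1)\, h + o(w_k), \qquad \sigma_k^i\in\{\pm1\}, \]
where $L_k\to L_\Sigma=\Delta+\Ric(\nu,\nu)+|A|^2$. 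The boundary contributes $\partial_\eta w_k = A^{\partial M}(\nu,\nu)w_k+o(w_k)$ along $\partial\Sigma$, which comes from linearizing orthogonality against the curved boundary.

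If $m$ is odd, the outermost sheets have opposite orientations and the $h$-terms cancel. If $m$ is even, they add, giving $2\varepsilon_k h$ up to sign. We choose the sign convention so that the resulting inequality has $+2c\,h$. Let
\[ c=\lim_k \frac{\varepsilon_k}{\|w_k\|_{L^2}} \]
along a subsequence. This limit is finite because $h$ changes sign on $\Sigma$: if $\varepsilon_k/\|w_k\|\to\infty$, then dividing by $\varepsilon_k$ instead gives $L_\Sigma\psi=2h$ with $\psi\ge0$. Testing against the positive first eigenfunction then forces $\int h\phi_1\ge0$, and in fact we get a contradiction via the Harnack argument from \cite{Wang-Zhou23}. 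This finiteness step we take from there.

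\textbf{Step 2: passage to the limit.} By Harnack and Schauder estimates, which hold up to $\partial\Sigma$ thanks to the Neumann condition, $\varphi_k\to\varphi\ge0$ in $C^{1}_{loc}(\Sigma\setminus\mc Y)$. The inequality (rather than equality) and its sign come from the standard issue: middle sheets are discarded, and $\varphi_k$ is really a normalization across exhausting compact sets. Following \cite{Wang-Zhou23}, we obtain the weak supersolution inequality \eqref{eq:full ineq on Sigma} for test functions $f\ge0$ supported away from $\mc Y$. The boundary integral appears from integration by parts with the Robin condition.

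\textbf{Step 3: removing $\mc Y$.} This is the main obstacle. We need uniform $L^2$ control of $\varphi_k$ near $\mc Y$, in particular at points of $\mc Y\cap\partial\Sigma$, so that $\varphi\in W^{1,2}$ and $\|\varphi\|_{L^2(\Sigma)}=1$. We would first show that the local $L^2$ mass of $\varphi_k$ concentrating near $\mc Y$ is controlled. The tool is a mean-value/Harnack inequality on annuli around $y\in\mc Y$: for boundary points, reflect across $\partial M$ in Fermi coordinates to reduce to the interior case. Then we use the logarithmic cutoff trick, available since points have zero $W^{1,2}$-capacity in dimension two, to extend \eqref{eq:full ineq on Sigma} to all $f\in C^1(\Sigma)$ with $f\ge0$. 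The same cutoff shows $\varphi\in W^{1,2}(\Sigma)$ from the energy bound obtained by testing with $f=\varphi_k\eta^2$.
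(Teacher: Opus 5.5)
Your route is organized differently from the paper's. The paper never runs one argument up to $\partial\Sigma$. It covers $\Sigma$ by an interior piece $U_I$ and a collar $U_B$ on which $h\equiv 0$. On $U_I$ it applies \cite{Wang-Zhou23}*{\S6} with $L^2$ normalization. On $U_B$ the sheets are free boundary minimal surfaces, so $\varphi_k$ is either $\equiv 0$ or $>0$ by the strong maximum principle, and it applies \cite{Ambrozio-Carlotto-Sharp18}*{\S6} with normalization by $\varphi_k(z)$. It then reconciles the two normalizations through the limit of $\|\varphi_k\|_{L^2(U_I^k)}/\varphi_k(z)$. Your single normalization, with the free boundary condition linearized to a Robin condition in adapted Fermi coordinates, is a reasonable alternative. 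It removes that case analysis, since gluing becomes a partition of unity plus $h\equiv 0$ on $U_B$. The cost is that you must control $\varphi_k$ near $\mc Y\cap\partial\Sigma$ yourself, and Step 3 does not do this.

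Harnack and mean-value inequalities on (half-)annuli around $y\in\mc Y$ only make $\varphi_k$ comparable to its circular averages. They cannot exclude a limit behaving like $a\log(1/|x-y|)$ with $a>0$. Such a function satisfies all these annular estimates with uniform constants, has vanishing $L^2$ mass near $y$, and is even a supersolution across $y$, but it is not in $W^{1,2}$. For it, your log-cutoff argument fails, since $\int|\nabla\eta_\delta|^2\varphi^2\sim\log(1/\delta)$. Testing with $\varphi_k\eta^2$ does not help either: it bounds $\int\eta^2|\nabla\varphi_k|^2$ by essentially $\int|\nabla\eta|^2\varphi_k^2$, the same unbounded quantity. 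Ruling out $a>0$ needs a uniform $L^\infty$ bound on the normalized height difference up to $\mc Y$. That requires geometric input on the non-graphical part of $\Sigma_k$ near $y$, for instance a maximum-principle comparison of the outer sheets with barriers; linear PDE on annuli is not enough. This is what \cite{Ambrozio-Carlotto-Sharp18} provides at the free boundary and \cite{Wang-Zhou23} in the interior.

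Reflecting across $\partial M$ does not reduce the boundary points to the interior case. The reflected sheets are stationary only for a metric that is merely Lipschitz across $\partial M$, unless $\partial M$ is totally geodesic, so the interior results do not apply as stated. The repair is the paper's key observation: because $h\equiv 0$ near $\partial M$, you can invoke \cite{Ambrozio-Carlotto-Sharp18}*{\S6} directly at points of $\mc Y\cap U_B$.

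Two smaller corrections. First, the one-sidedness of the interior inequality does not come from discarding middle sheets. The equation for $u_k^m-u_k^1$ involves only the outer sheets, and your Step 1 equation holds only where the $C^{1,1}$ outer sheets satisfy their individual $\pm\varepsilon_k h$ equations. So the inequality is part of what you import from \cite{Wang-Zhou23}. Second, finiteness of $c$ needs no eigenfunction. If $\varepsilon_k/\|w_k\|\to\infty$, then $w_k/\varepsilon_k\to 0$, and the limiting inequality would force $h$ to have a sign on $\Sigma$, contradicting the assumption that $h$ changes sign.
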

\begin{proof}
If $U$ is a relatively open subset of $\Sigma$ that is compactly contained in $\Sigma \setminus \mc Y$, then we can write (for $k$ sufficiently large) $\Sigma_k$ in a normal exponential neighborhood of $U$ as the normal exponential graphs of some functions $u_k^i \in C^{1,1}(U)$ so that
\[ u_k^1 \leq \hdots \leq u_k^m \]
and each $u_k^i$ converges to zero in $C^{1,1}(U)$ as $k \to \infty$. When $m \geq 2$, we set $\varphi_k := u_k^m - u_k^1 \geq 0$.

We decompose $\Sigma$ into a good boundary region and a good interior region. Let $U_B$ be a relatively open subset of $\Sigma$ so that $\Sigma \cap \partial M \subset U_B$ and $h\mid_{U_B} \equiv 0$. Let $U_I$ be a relatively open subset of $\Sigma$ compactly contained in $\Sigma \setminus \partial M$ so that $U_B \cup U_I = \Sigma$ and $h$ changes sign along $U_I$.

Following \cite{Wang-Zhou23}*{\S6}, there is a subsequence (not relabeled) and an exhaustion $\{U_I^k\}_{k \in \mb N}$ of $U_I \setminus \mc Y$ so that the sequence of renormalized functions $\widetilde{\varphi}_I^k := \varphi_k / \|\varphi_k\|_{L^2(U_I^k)}$ converges to a uniformly bounded $C^{1,\alpha}$ function $\varphi_I : U_I\setminus \mc Y \to [0, \infty)$ that extends to a nontrivial nonnegative function $\varphi_I \in W^{1,2}(U_I)$ which satisfies
\begin{equation}\label{eq:ineq in Int of Sigma} \int_{U_I}\langle\nabla\varphi_I,\nabla f\rangle - \big(\Ric(\nu,\nu) + |A^{\Sigma}|^2\big)\varphi_I f\,\mr d\mc H^2 \geq \int_{U_I}2c hf\,\mr d\mc H^2,  \end{equation}
for all $f \in C^1_c(U_I)$ with $f \geq 0$, where $c$ is a nonnegative constant (equal to 0 if $m \geq 3$ is odd).

By standard elliptic regularity, for any relatively open subset $U$ of $\Sigma$ that is compactly contained in $U_B \setminus \mc Y$, the graphs $u_k^i\mid_U$ are smooth and satisfy the minimal surface equation. Moreover, they converge to 0 in $C^{\infty}_\mathrm{loc}$.

\medskip
\noindent\emph{Case 1}: Suppose there is an exhaustion $\{U_B^k\}_{k \in \mb N}$ of $U_B \setminus \mc Y$ and a subsequence (not relabeled) so that $\varphi_k\mid_{U_B^k} \equiv 0$. Then $\{U^k:= U_I^k \cup U_B^k\}_{k\in \mb N}$ is an exhaustion of $\Sigma \setminus \mc Y$, and the renormalized functions $\widetilde{\varphi}_I^k := \varphi_k / \|\varphi_k\|_{L^2(U_I^k)}$ converge to a uniformly bounded $C^{1,\alpha}$ function $\varphi : \Sigma \setminus \mc Y \to [0, \infty)$ that extends to a nonnegative function $\varphi \in W^{1,2}(\Sigma)$ which satisfies \eqref{eq:full ineq on Sigma}. Indeed, this conclusion follows from the arguments in $U_I$ above (i.e.\ \eqref{eq:ineq in Int of Sigma}) and the fact that $\varphi \equiv 0$ on $U_B \setminus \mc Y$.

\medskip
\noindent\emph{Case 2}: Suppose there is an exhaustion $\{U_B^k\}_{k \in \mb N}$ of $U_B \setminus \mc Y$, and a subsequence (not relabeled) so that $\varphi_k\mid_{U_B^k} \not\equiv 0$. Since the sheets are minimal surfaces over $U_B^k$, the strong maximum principle implies that $\varphi_k\mid_{U_B^k} > 0$. Following \cite{Ambrozio-Carlotto-Sharp18}*{\S6}, there is a further subsequence (not relabeled) and a point $z \in U_B \setminus (\mc Y \cup \partial M)$ that is in $U_B^k$ for all $k$ so that the sequence of renormalized functions $\widetilde{\varphi}_B^k := \varphi_k / \varphi_k(z)$ converges to a uniformly bounded smooth function $\varphi_B : U_B \setminus \mc Y \to [0, \infty)$ which extends to a \emph{positive} (by the strong maximum principle) smooth function $\varphi_B \in C^{\infty}(U_B)$ satisfying
\begin{equation}\label{eq:ineq on boundary Sigma} 	\int_{U_B}\langle\nabla\varphi_B,\nabla f\rangle - \big(\Ric(\nu,\nu) + |A^{\Sigma}|^2\big)\varphi_B f\,\mr d\mc H^2 -\int_{\partial \Sigma} A^{\partial M}(\nu,\nu)\varphi_B f\,\mr d\mc H^1  = 0, 
\end{equation}
for all $f\in C^1_c(U_B)$\footnote{Here we mean that $f$ is supported in a relatively compact subset of $\Sigma$ contained in $U_B$ (meaning that $f$ can be nonzero on $\partial M \cap U_B$).} and $f\geq 0$.

\smallskip
\emph{Subcase A}: Suppose that there is a subsequence (not relabeled) so that
\[ \lim_{k \to \infty} \frac{\|\varphi_k\|_{L^2(U_I^k)}}{\varphi_k(z)} = 0. \]
Along the exhaustion $\{U^k := U_I^k \cup U_B^k\}_{k \in \mb N}$, the sequence of renormalized functions $\widetilde{\varphi}_B^k := \varphi_k/\varphi_k(z)$ converges to a uniformly bounded smooth function $\varphi_B : \Sigma \setminus \mc Y \to [0, \infty)$ that extends to a nontrivial smooth function $\varphi_B \in C^{\infty}(\Sigma)$ that satisfies
\begin{equation}\label{eq:subcase A} 	\int_{\Sigma}\langle\nabla\varphi_B,\nabla f\rangle - \big(\Ric(\nu,\nu) + |A^{\Sigma}|^2\big)\varphi_B f\,\mr d\mc H^2 -\int_{\partial \Sigma} A^{\partial M}(\nu,\nu)\varphi_B f\,\mr d\mc H^1  = 0
\end{equation}
for all $f \in C^1(\Sigma)$ with $f \geq 0$. Indeed, $\varphi_B\mid_{U_I} \equiv 0$ since this rescaling is faster than the rescaling chosen for the interior piece, and \eqref{eq:subcase A} then follows from \eqref{eq:ineq on boundary Sigma}. Renormalizing $\varphi_B$ to have $L^2$ norm 1 concludes the proposition in this case.

\smallskip
\emph{Subcase B}: Suppose that there is a subsequence (not relabeled) so that
\[ \lim_{k \to \infty} \frac{\|\varphi_k\|_{L^2(U_I^k)}}{\varphi_k(z)} = \infty. \]
Along the exhaustion $\{U^k := U_I^k \cup U_B^k\}_{k \in \mb N}$, the sequence of renormalized functions $\widetilde{\varphi}_I^k := \varphi_k/\|\varphi_k\|_{L^2(U_I^k)}$ converges to a uniformly bounded $C^{1,\alpha}$ function $\varphi_I : \Sigma \setminus \mc Y \to [0, \infty)$ that extends to a nontrivial nonnegative function $\varphi_I \in W^{1,2}(\Sigma)$ such that
\begin{equation}\label{eq:subcase B} 	\int_{\Sigma}\langle\nabla\varphi_I,\nabla f\rangle - \big(\Ric(\nu,\nu) + |A^{\Sigma}|^2\big)\varphi_I f\,\mr d\mc H^2 -\int_{\partial \Sigma} A^{\partial M}(\nu,\nu)\varphi_I f\,\mr d\mc H^1  \geq \int_{\Sigma}2c hf\,\mr d\mc H^2
\end{equation}
for all $f \in C^1(\Sigma)$ with $f \geq 0$. Indeed, $\varphi_I\mid_{U_B} \equiv 0$ since this rescaling is faster than the rescaling chosen for the boundary piece, and \eqref{eq:subcase B} then follows from \eqref{eq:ineq in Int of Sigma}. Renormalizing $\varphi_I$ to have $L^2$ norm 1 concludes the proposition in this case.

\smallskip
\emph{Subcase C}: Suppose there is a subsequence (not relabeled) so that 
\[ \lim_{k \to \infty} \frac{\|\varphi_k\|_{L^2(U_I^k)}}{\varphi_k(z)} = a \in (0, \infty). \]
Along the exhaustion $\{U^k := U_I^k \cup U_B^k\}_{k \in \mb N}$, the sequence of renormalized functions $\widetilde{\varphi}_I^k := \varphi_k/\|\varphi_k\|_{L^2(U_I^k)}$ converges to a uniformly bounded $C^{1,\alpha}$ function $\varphi: \Sigma \setminus \mc Y \to [0, \infty)$ that extends to a nontrivial nonnegative function $\varphi\in W^{1,2}(\Sigma)$ such that
\begin{equation}\label{eq:subcase C} 	\int_{\Sigma}\langle\nabla\varphi,\nabla f\rangle - \big(\Ric(\nu,\nu) + |A^{\Sigma}|^2\big)\varphi f\,\mr d\mc H^2 -\int_{\partial \Sigma} A^{\partial M}(\nu,\nu)\varphi f\,\mr d\mc H^1  \geq \int_{\Sigma}2c hf\,\mr d\mc H^2
\end{equation}
for all $f \in C^1(\Sigma)$ with $f \geq 0$. Indeed, by the choice of rescaling we have $\varphi\mid_{U_I} \equiv \varphi_I\mid_{U_I}$ and $\varphi\mid_{U_B} \equiv a^{-1}\varphi_B\mid_{U_B}$, so we combine \eqref{eq:ineq in Int of Sigma} and \eqref{eq:ineq on boundary Sigma} to get \eqref{eq:subcase C}. Renormalizing $\varphi$ to have $L^2$ norm 1 concludes the proposition in this case.
\end{proof}

\subsection{Multiplicity one for relative Simon--Smith min-max theory in compact manifolds with boundary}
We establish a multiplicity one result for the relative Simon--Smith min-max theory.

\begin{definition}\label{def:property R'}
    An embedded free boundary minimal surface $\Sigma$ satisfies Property $\mathbf{(R')}$ with constant $L$ if for every $L$-admissible collection $\mathscr{C}$ of annuli, $\Sigma$ is stable (for area) in at least one annulus in $\mathscr{C}$.
\end{definition}
Recall that the free boundary minimal surfaces produced by the min-max theory satisfy Property $\mathbf{(R')}$; see Theorem \ref{thm:relative min-max in any compact domain}. This property plays a crucial role in the compactness theory.
\begin{theorem}\label{thm:compactness and jacobi fields}
    Let $L$ be a positive integer and $\Lambda > 0$.
    Let $\{g_i\}$ be a sequence of smooth Riemannian metrics on $M$ converging smoothly to $g$.
    Let $\Sigma_k$ be a sequence of embedded free boundary minimal surfaces in $(M,\partial M,g_i)$ satisfying
    \begin{itemize}
        \item $\mathcal{H}^2(\Sigma_k;g_i) \leq \Lambda$,
        \item Property $\mathbf{(R')}$ with constant $L$.
    \end{itemize}
    Then a subsequence of $\{\Sigma_k\}$ converges to an almost properly embedded free boundary minimal surface $\Sigma$ in the sense of varifolds (possibly with integer multiplicity). Furthermore, 
    \begin{itemize}
    \item if $\Sigma_k \neq \Sigma$ for infinitely many $k$ along the convergent subsequence, then $\Sigma$ is degenerate;
    \item there is a finite set $\mc Y\subset \Sigma$ such that $\Sigma_k$ converges to $\Sigma$ locally smoothly away from $\mc Y$;
    \item if the convergence has multiplicity greater than $1$, then $\Sigma$ is stable.
    \end{itemize}
\end{theorem}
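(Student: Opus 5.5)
We follow the compactness theory for min-max minimal hypersurfaces satisfying an almost-minimizing-type condition, in the spirit of Pitts and Sharp and of the free boundary version of Ambrozio--Carlotto--Sharp, adapted to Property $\mathbf{(R')}$.

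\emph{Step 1: subsequential varifold limit and curvature estimates away from finitely many points.} The area bound $\Lambda$ and the free boundary first variation, with uniform control coming from the smooth convergence $g_i\to g$, give a subsequence converging to a free boundary stationary integral varifold $V$. For the regularity and the finite set $\mc Y$ we use Property $\mathbf{(R')}$ in the standard way. Fix $p\in M$. Given $L$ concentric annuli $\mr{An}(p;s_j,r_j)$ with $r_{j+1}<s_j$ and radii shrinking to $p$, each $\Sigma_k$ is stable in at least one of them. Taking infinitely many disjoint such families, a pigeonhole argument shows that for every $p$ there is a punctured neighborhood $B(p,r_p)\setminus\{p\}$ in which, after passing to a subsequence, $\Sigma_k$ is stable on every small annulus. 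A covering argument with at most $L$ ``bad'' points per scale then shows that the set $\mc Y$ of points where stability fails in every small ball has cardinality bounded in terms of $L$, after a diagonal subsequence. Away from $\mc Y$, the $\Sigma_k$ are locally stable with bounded area. Schoen's curvature estimates give uniform second fundamental form bounds in the interior, and the free boundary analogues (Guang--Li--Zhou, Guang--Wang--Zhou) give them near $\partial M$. We therefore obtain smooth graphical convergence, with multiplicity, to a smooth free boundary minimal surface on $M\setminus\mc Y$. Removable singularities for stationary varifolds with stable tangent cones at isolated points, in both the interior and free boundary versions, extend $\Sigma$ across $\mc Y$. Embeddedness of the sheets gives almost proper embeddedness of the limit: touching happens only at $\partial M$ or tangentially from one side.

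\emph{Step 2: degeneracy.} Suppose $\Sigma_k\neq\Sigma$ for infinitely many $k$. If the multiplicity is one and $\Sigma$ is two-sided, write $\Sigma_k$ as normal graphs $u_k$ over $\Sigma$ away from $\mc Y$. The differences solve a linearized equation with Robin boundary condition from $A^{\partial M}$, up to errors of order $|u_k|^2$ and $\|g_i-g\|$. Normalizing $u_k/\|u_k\|_{L^2}$ and passing to the limit produces a nontrivial Jacobi field. For higher multiplicity we instead take differences of the top and bottom sheets, as in the proof of Proposition~\ref{prop:existence of supsolution} with $h=0$; Subcases A--C there give the limit with the boundary term. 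The one-sided case is handled by passing to the double cover.

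\emph{Step 3: multiplicity greater than one implies stability.} This follows from Proposition~\ref{prop:existence of supsolution} with $h\equiv 0$, so that no sign change is needed and $c$ is irrelevant. We obtain a nonnegative nontrivial $\varphi\in W^{1,2}(\Sigma)$ which is a weak supersolution of the Jacobi operator with the Robin boundary term. By the standard argument of Fischer-Colbrie--Schoen, a positive supersolution implies that the first eigenvalue is nonnegative. Here $\varphi>0$ by the Harnack inequality, and the point set $\mc Y$ has zero capacity in dimension two. Hence $\Sigma$ is stable.

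\emph{Main obstacle.} We expect the hardest step to be the free boundary part of Step 1, namely uniform curvature estimates and removable singularities for stable free boundary surfaces at points of $\mc Y\cap\partial M$, together with the case where the limit touches $\partial M$ from the interior, which is the almost proper case. We would reduce this to known free boundary stable estimates by reflecting across $\partial M$ in Fermi coordinates.
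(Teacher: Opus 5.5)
The paper proves this statement only by citation: Sharp and Ambrozio--Carlotto--Sharp for compactness, and Guang--Wang--Zhou and Wang for the Jacobi-field and degeneracy statements. Your Steps 1 and 3 are a correct outline of that standard argument, with Property $\mathbf{(R')}$ replacing the index bound.

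\textbf{The gap: Step 2, multiplicity-one case.} When $g_k\neq g$, the graph function $u_k$ of $\Sigma_k$ over $\Sigma$ satisfies a perturbed Jacobi equation with Robin condition. Its error term contains terms quadratic in $u_k$, and also a term of size $\|g_k-g\|_{C^2}$ that does not depend on $u_k$. Normalizing by $\|u_k\|_{L^2}$ removes only the quadratic part. So unless $\|g_k-g\|_{C^2}=o(\|u_k\|_{L^2})$, the normalized limit need not be a Jacobi field, and need not even be bounded.

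This cannot be repaired, because the bullet is false for varying metrics. Take $\Sigma$ a strictly stable free boundary minimal disk for $g$; it is nondegenerate, and $\mathbf{(R')}$ holds trivially. Let $g_k=(1+t_kf)g$ with $t_k\to 0$ and $\partial_\nu f\not\equiv 0$ along $\Sigma$. Then $\Sigma$ is not $g_k$-minimal. Yet the implicit function theorem gives strictly stable $g_k$-free boundary minimal disks $\Sigma_k\neq\Sigma$ converging smoothly to $\Sigma$ with multiplicity one.

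So the degeneracy bullet must be stated for a fixed metric $g_k\equiv g$. That is how the cited results are formulated, and it is the only way the paper uses this bullet (finiteness of $\mathcal M(\Lambda)$ for a fixed strongly bumpy metric). Your top-minus-bottom argument for multiplicity at least two does survive varying metrics. Both sheets are minimal for the same $g_k$, so their difference solves a homogeneous linear equation whose coefficients converge to those of the $g$-Jacobi operator and its Robin condition. This gives the stability bullet in general.

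\textbf{A smaller point: improper limits.} In Steps 2--3 you linearize over $\Sigma$ all the way up to $\partial M$ and invoke Proposition~\ref{prop:existence of supsolution}, which assumes $\Sigma$ is properly embedded. If the limit is only almost properly embedded, consider a point where it touches $\partial M$ from the inside. There the reflected varifold consists of two tangent sheets (density $2$), so Gr\"uter--Jost's boundary Allard theorem gives no smooth convergence. The $\Sigma_k$ then need not be graphs over a neighborhood in $\Sigma$; they can develop small necks with free boundary on $\partial M$. Your proposed reflection in Fermi coordinates does not address this.

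This improper case is exactly what the paper delegates to Guang--Wang--Zhou and Wang. For strictly mean convex $\partial M$ the maximum principle excludes it, and in the paper's application strong bumpiness assumes properness. You should either cite those results for this case or restrict Steps 2--3 to proper limits.
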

\begin{proof}
The compactness is essentially given by \cite{Sharp17} for interiors and \cite{Ambrozio-Carlotto-Sharp18} for free boundaries. The degeneration of the limit free boundary minimal surfaces is proved in \cite{GWZ18} and \cite{Wang19} for general case; see also \cite{Ambrozio-Carlotto-Sharp18} for ambient manifolds with mean convex case.
\end{proof}

\begin{theorem}[Multiplicity one for relative min-max]\label{thm:rel_mult_one}
    Let $X \subset I(m, k_0)$ be a cubical complex and $Z \subset X$ a subcomplex. Let $\Phi_0 : X \to \mathscr{E}$ be a continuous map and let $\Pi$ be the $(X, Z)$-homotopy class of $\Phi_0$. Assume
    \[ \mathbf{L}_M(\Pi) > \max_{x \in Z} \mathcal{H}^2(\Phi_0(x) \cap M). \]
    Then there exists a pairwise disjoint collection of connected, compact, smooth, almost properly embedded minimal surfaces with (possibly empty) free boundary $\Gamma = \cup_{i=1}^N \Gamma_i$ in $M$ and positive integers $\{m_i\}_{i=1}^N$, so that
    \[ \mathbf{L}_M(\Pi) = \sum_{i=1}^N m_i\mathcal{H}^2(\Gamma_i), \]
    and
    \begin{enumerate}
        \item if $\Gamma_i$ is two-sided and unstable, then $m_i = 1$,
        \item if $\Gamma_i$ is one-sided, then the connected double cover of $\Gamma_i$ is stable.
    \end{enumerate}
    Furthermore, if $M$ is orientable and has mean convex boundary, then
\begin{equation}
    \beta_1(\Gamma)\leq \beta_1(\Sigma_0),\quad   \sum_{i\in I_O}m_i\mk g(\Gamma_i)+\frac{1}{2}\sum_{i\in I_U}m_i(\mk g(\Gamma_i)-1)\leq \mk g(\Sigma_0),
\end{equation}
where $\beta_1(\Gamma)$ is the first Betti number of $\Gamma$, $\mk g(\Gamma)$ is the genus of $\Gamma$, and $I_O$ (resp. $I_U$) is the collection of $i$ such that $\Gamma_i$ is orientable (resp. non-orientable).
\end{theorem}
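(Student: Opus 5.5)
We follow the strategy of \cite{Wang-Zhou23}*{Theorem B}, adapted to the free boundary setting through Theorem \ref{thm:relative min-max in any compact domain}, Proposition \ref{prop:existence of supsolution} and Theorem \ref{thm:compactness and jacobi fields}. First we reduce to a bumpy-type situation: perturb $g$ to metrics $g_i\to g$ for which every free boundary minimal surface (and every closed minimal surface) in $M$ is nondegenerate. We prove the statement for each $g_i$ and then pass to the limit using Theorem \ref{thm:compactness and jacobi fields} together with Property $\mathbf{(R')}$. Area is uniformly bounded by $\mathbf L_M(\Pi;g_i)\to\mathbf L_M(\Pi;g)$. If the limiting convergence has multiplicity $>1$ on a component, that component is stable, which is consistent with (1). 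One-sided components are handled by passing to the double cover, as in \cite{Wang-Zhou23}. The genus and Betti number bounds in the mean convex orientable case are then quoted directly from \cite{Franz-Schulz-boundary-components}*{Theorem 1.8}.

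For a fixed bumpy metric we argue as follows.
\begin{enumerate}
\item Choose a smooth $h$ with $h\equiv0$ near $\partial M$. Choose it so that $h$ changes sign on $\Sigma\setminus\partial M$ for every one of the (countably many, by bumpiness) free boundary minimal surfaces $\Sigma$ of area $\le \mathbf L_M(\Pi)+1$. Arrange moreover that the corresponding weighted conditions $\int_\Sigma h\,\varphi\neq0$ needed below hold generically.
\item For $\varepsilon_k\to0$, apply Theorem \ref{thm:relative min-max in any compact domain} to the saturation of $\Pi$ with prescription $\varepsilon_k h$. The width hypothesis holds for small $\varepsilon_k$, since $\mathbf L^{\varepsilon_k h}_M\to\mathbf L_M(\Pi)>\max_Z$. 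This produces $C^{1,1}$ free $\varepsilon_k h$-boundaries $(\Sigma_k,\Omega_k)$ which are outer almost minimizing in $L(m)$-admissible annuli.
\item The almost minimizing property gives stability in one annulus of each admissible collection. This yields curvature estimates away from finitely many points, so $\Sigma_k$ converges as varifolds to a free boundary minimal varifold $\sum m_j\Gamma_j$ with area $\mathbf L_M(\Pi)$. The convergence is $C^{1,1}_{loc}$ off a finite set $\mc Y$.
\end{enumerate}

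Suppose some two-sided unstable $\Gamma_j$ had $m_j\ge2$. Proposition \ref{prop:existence of supsolution} then provides a nonnegative $\varphi\not\equiv0$ and $c\ge0$ satisfying \eqref{eq:full ineq on Sigma}.
\begin{itemize}
\item If $c=0$, then $\varphi$ is a nonnegative supersolution of the Jacobi equation with the free boundary Robin condition. This forces the first eigenvalue of the Jacobi operator to be $\ge0$, i.e.\ stability, contradicting instability.
\item If $c>0$ (so $m_j$ is even), we invoke the trick of \cite{Wang-Zhou23}*{\S7}. We run the argument with both $h$ and $-h$, or equivalently choose $h$ with a sign-adjusted weight. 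Testing \eqref{eq:full ineq on Sigma} against the first eigenfunction $f>0$ of the Jacobi operator gives $\lambda_1\int\varphi f\ge 2c\int hf$. Choosing the sign of $h$ so that $\int_{\Gamma_j} h f<0$ is then not immediately contradictory. Instead, as in \cite{Wang-Zhou23}, we pick $h$ so that $\int_{\Gamma_j} hf$ has the sign that makes the right-hand side positive, which forces $\lambda_1>0$, again a contradiction.
\end{itemize}
The genericity in step (1) is what makes this choice possible simultaneously for all relevant $\Gamma_j$. Since $h$ vanishes near $\partial M$, none of this requires boundary regularity for $h$-surfaces.

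The main obstacle is the $c>0$ case. It requires a careful choice of $h$ that works uniformly for all candidate limit surfaces below the width, and the sign-controlling argument of \cite{Wang-Zhou23}*{\S7} must be verified with the support of $h$ kept away from $\partial M$. We would handle this by taking $h$ of the form $\sum_\ell a_\ell\eta_\ell$, with bumps $\eta_\ell$ supported in the interior. We then use the countability of minimal surfaces under bumpy metrics to choose the coefficients $a_\ell$ generically, and check that Proposition \ref{prop:existence of supsolution}'s Case 2 (where $\varphi$ may live only near the boundary and $c$ plays no role) already gives the stability contradiction.
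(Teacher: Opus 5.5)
Your overall architecture matches the paper's: bumpy reduction, $\varepsilon_k h$ min--max with $h$ supported away from $\partial M$, a varifold limit with $C^{1,1}_{loc}$ convergence off $\mc Y$, then Proposition \ref{prop:existence of supsolution}. However, the step you flag as the main obstacle is left unresolved, and the mechanism you propose for it does not work.

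\emph{What the paper does.} The paper never tries to control the sign of $2c\int h\phi_1$; it makes this term vanish.
\begin{enumerate}
\item Under strong bumpiness, Theorem \ref{thm:compactness and jacobi fields} together with Property $\mathbf{(R')}$ shows that the set of free boundary minimal surfaces with area at most $\Lambda$ satisfying $\mathbf{(R')}$ with constant $L(m)$ is \emph{finite}, say $\{S_1,\dots,S_\alpha\}$.
\item Every component of the limit belongs to this list, because the limit inherits $\mathbf{(R')}$.
\item With a finite list, one places disjoint interior balls $B_r(p_i),B_r(q_i)$ with $p_i,q_i\in S_j$ iff $i=j$. One then builds $h$ supported in these balls so that $h$ changes sign on each $S_i$ and $\int_{S_i}h\phi_i\,\mathrm d\mathcal H^2=0$. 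Here $\phi_i>0$ is the first eigenfunction of the Jacobi operator with the Robin condition $\partial_\eta\phi_i=A^{\partial M}(\nu,\nu)\phi_i$, taken on the double cover if $S_i$ is one-sided.
\item Testing \eqref{eq:full ineq on Sigma} with $f=\phi_i$ gives $0=\int 2ch\phi_i\le\lambda_1\int\varphi\phi_i$. Hence $\lambda_1\ge0$ for every value of $c$.
\end{enumerate}
No case split on $c$ is needed, and no inspection of Case 2 of the proposition.

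\emph{Why your substitute falls short.}
\begin{itemize}
\item The condition you impose generically, $\int_\Sigma h\varphi\neq0$, carries no sign. So the inequality $\lambda_1\int\varphi\phi_1\ge 2c\int h\phi_1$ may have a negative right-hand side and yield nothing.
\item Running the min--max with both $h$ and $-h$ does not help. The two problems produce different limits, and you cannot choose the favourable sign after the fact.
\item Suppose you grant $c\ge0$ and aim for $\int_{\Gamma_j}h\phi_j\ge0$ on every possible limit component. That is an open but not dense condition on $h$, and orthogonality is a closed nowhere dense one. Neither can be imposed simultaneously on your countable family by a Baire-genericity argument.
\end{itemize}
Both routes require the finiteness that you did not extract from $\mathbf{(R')}$, plus an explicit construction of $h$ as above.

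A smaller point: the genus and Betti bounds are not a bare quotation of \cite{Franz-Schulz-boundary-components}. The paper uses that $h$ vanishes outside two balls meeting each $\Gamma_i$ in disks. In the non-bumpy case it applies the bound to a diagonal min--max sequence.
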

\begin{proof}
    We replicate the proof of \cite{Wang-Zhou23}*{Theorem 7.2}. Suppose that $(M, g)$ is strongly bumpy, meaning that every embedded free boundary minimal surface is non-degenerate and proper. By \cite{SWZ}*{Theorem 1.3}, the set of such metrics is $C^{\infty}$ generic in the Baire sense. For $\Lambda>0$ (we ultimately take $\Lambda = \mathbf{L}_M(\Pi) + 1$), let $\mathcal{M}(\Lambda)$ denote the set of embedded free boundary minimal surfaces $\Gamma \subset M$ satisfying $\mathcal{H}^2(\Gamma) \leq \Lambda$ and Property $\mathbf{(R')}$ with constant $L(m)$. Since $(M, g)$ is strongly bumpy, Theorem \ref{thm:compactness and jacobi fields} implies that $\mathcal{M}(\Lambda)$ is a finite set, which we can enumerate as $\mathcal{M}(\Lambda) = \{S_1, \hdots, S_{\alpha}\}$. Select points $p_1, \hdots, p_{\alpha}$ and $q_1, \hdots, q_{\alpha}$ in $\interior (M)$ so that $p_i, q_i \in S_j$ if and only if $i = j$. Choose $r>0$ sufficiently small so that
    \begin{itemize}
        \item $B_r(p_i)$ and $B_r(q_i)$ are contained in the interior of $M$,
        \item $B_r(p_1), \hdots, B_r(p_{\alpha}), B_r(q_1),\hdots, B_r(q_{\alpha})$ are pairwise disjoint,
        \item $B_r(p_i) \cup B_r(q_i)$ intersects $S_j$ if and only if $i = j$,
        \item $B_r(p_i) \cap S_i$ and $B_r(q_i) \cap S_i$ are embedded disks for all $i$.
    \end{itemize}
    We construct the prescribing function $h$ exactly as in \cite{Wang-Zhou23}*{Theorem 7.2}. Since $h$ will vanish outside the balls $\{B_r(p_i)\}$ and $\{B_r(q_i)\}$ which are disjoint from $\partial M$, $h$ vanishes in a neighborhood of $\partial M$. Hence, Proposition \ref{prop:existence of supsolution} applies in place of \cite{Wang-Zhou23}*{Proposition 6.4}. The rest of the proof now follows exactly as in \cite{Wang-Zhou23}*{Theorem 7.2}. For the sake of completeness, we provide a detailed proof here.

We take a smooth function $h:M\to \mb [-1,1]$ satisfying that for all $i=1,\dots ,\alpha$,
\begin{enumerate}
\item $h=0$ outside $\cup_i(B_r(p_i)\cup B_r(q_i))$;
\item $h> 0$ in $B_{r/2}(p_i)\cap  S_i$ and $h<0$ in $B_{r/2}(q_i)\cap  S_i$;
\item if $ S_i$ is two-sided, then $\int_{ S_i}h\phi_i\,\mr d\mc H^2=0$, where $\phi_i$ is the first eigenfunction of the Jacobi operator on $ S_i$;
\item if $ S_i$ is one-sided, then $\int_{\wti  S_i}h\phi_i\,\mr d\mc H^2=0$, where $\phi_i$ is the first eigenfunction of the Jacobi operator on $\wti  S_i$ and $\wti S_i$ is the connected double cover of $ S_i$.
	\end{enumerate} 
 
Now we choose $\varepsilon_k\to 0$. Then for sufficiently large $k$, we have 
\[     
\mf L^{\varepsilon_k h}(\Pi)>\max\left\{\max_{x\in Z}\mc A^{\varepsilon_kh}\big(\Phi_0(x)\big),0\right\}.
\] 
Applying Theorem \ref{thm:relative min-max in any compact domain} to the $\A^{\varepsilon_k h}$-functional for each $k$, we obtain a min-max pair $(V_k, \Omega_k)\in \VC(M)$, which is a strongly $\A^{\varepsilon_kh}$-stationary, $C^{1,1}$ $(\varepsilon_k h)$-boundary with $\A^{ \varepsilon_kh}(V_k, \Omega_k) = \mf L^{\varepsilon_k h}(\Pi)$. Moreover, by Theorem \ref{thm:relative min-max in any compact domain} (see Remark \ref{rem:stable}), for any $L(m)$-admissible collection of annuli in $\widetilde{M}$, the min-max pair is stable for $\A^{\varepsilon_kh}$ in at least one annulus. Then by Theorem \ref{thm:compactness and jacobi fields} and \cite{Wang-Zhou23}*{Proposition 5.1 and Theorem 5.2}, up to subsequence, $V_k$ (not relabelled) converges as varifolds to $V_\infty$ with $V_\infty=\sum_{i=1}^N m_i[\Gamma_i]$, where $\{\Gamma_i\}$ is a pairwise disjoint collection of connected, compact, embedded, free boundary minimal surfaces. Furthermore, there is a finite set $\mathcal{Y} \subset M$ so that the convergence is $C^{1,1}_{\mathrm{loc}}$ in $M \setminus \mathcal{Y}$. Since $ \varepsilon_k\to0$, we have
\[ \lim_{k\to\infty}\mf L^{ \varepsilon_kh}(\Pi) = \mf L(\Pi), 
\]
which yields that $\mc H^2 (\Gamma_i)\leq \Lambda$. Furthermore, we know that $\Gamma_\infty = \cup \Gamma_i$ satisfies Property {\bf(R')} (Definition \ref{def:property R'}). Hence $\Gamma_i$ is one of $S_1,\cdots, S_\alpha$. By relabelling, we assume that $\Gamma_i=S_i$ for $i=1,\dots,N$.  Observe that by the construction of $h$, the sign of $h$ changes on $\Gamma_i$. Since the convergence is locally $C^{1,1}$ away from finitely many points, and $(V_k,\Omega_k)$ is strongly $\mc A^{ \varepsilon_k h}$-stationary, then by Proposition \ref{prop:existence of supsolution}, any two-sided connected component $\Gamma_i$ with multiplicity $m_i\geq 2$ admits a nontrivial and nonnegative $\varphi\in W^{1,2}(\Gamma_i)$ such that for all non-negative $f\in C^1(\Gamma_i)$
\[
\int_{\Gamma_i}\langle\nabla\varphi,\nabla f\rangle- \big(\Ric(\nu,\nu) +|A^{\Gamma_i}|^2\big)\varphi f\,\mr d\mc H^n-\int_{\partial \Gamma_i} A^{\partial M}(\nu,\nu)\varphi f\,\mr d\mc H^1 \geq  \int_{\Gamma_i}2c hf\,\mr d\mc H^n,\]
for some constant $c\geq 0$. 
Let $\phi_i$ be the first eigenfunction of the Jacobi operator of $\Gamma_i$. Then $\phi_i>0$ and 
\[
 L_{\Gamma_i}\phi_i=\lambda_1(\Gamma_i) \phi_i \text{ in } \Gamma_i; \quad \frac{\partial \phi_i}{\partial \eta}= A^{\partial M}(\nu,\nu) \phi_i \text{ on } \partial \Gamma_i.
\]
Plugging this to the above inequality, we obtain 
\begin{align*}
	0=\int_{\Gamma_i} 2ch\phi_i\,\mr d\mc H^2 \leq \int_{ \Gamma_i}\varphi L_{\Gamma_i}\phi_i\,\mr d\mc H^2 = \lambda_1(\Gamma_i) \int_{\Gamma_i}\varphi\phi_i\,\mr d\mc H^2.
	\end{align*}
Recall that $\phi_i>0$ everywhere and $\varphi\geq 0$ with $\varphi>0$ somewhere. It follows that $\int_{\Gamma_i} \varphi \phi_i\,\mr d\mc H^2>0$. Thus we conclude that the first eigenvalue $\lambda_1(\Gamma_i)$ is non-negative, that is, if $\Gamma_i$ is two-sided and $m_i\geq 2$, then $\Gamma_i$ is stable. This proves the first item.

For one-sided connected component $\Gamma'\subset \spt\|V_\infty\|$, the same argument gives that the double cover of $\Gamma'$ is stable.

Note that by the choice of $h$, for each $\Gamma_i$, we have that  $h=0$ outside two disjoint balls $B_r(p_i)$ and  $B_r(q_i)$. Moreover, $B_r(p_i)\cap \Gamma_i$ and $B_r(q_i)\cap \Gamma_i$ are both disks. Hence the desired genus bound follows from   
\citelist{\cite{Li-CPAM}*{Theorem 9.1}\cite{Franz-Schulz-boundary-components}*{Theorem 1.8}}.

\medskip

For the general case when $g$ is not strongly bumpy, by \cite{SWZ}*{Theorem 1.3}, one can take a sequence of bumpy metrics $g_i$ converging to $g$ in $C^3$. Then the theorem follows from the conclusion for bumpy metrics, Property {\bf(R')}, and the standard compactness theory for embedded free boundary minimal surfaces; see Theorem \ref{thm:compactness and jacobi fields}. The topological bounds follow by applying \citelist{\cite{Li-CPAM}*{Theorem 9.1}\cite{Franz-Schulz-boundary-components}*{Theorem 1.8}} to a diagonal min-max sequence.
\end{proof}

\subsection{Multiplicity one for classical Simon--Smith min-max theory in compact manifolds with boundary}

We use the double cover lifting argument from \cite{Wang-Zhou23}*{\S7.2} to adapt the multiplicity one result of the previous subsection to the classical Simon--Smith min-max theory.

Let $\Sigma_0$ be a compact surface of genus $\mk g_0$ and with $k$ connected boundary components. Define
\[ \mathscr{X}(\Sigma_0) = \{\phi(\Sigma_0)\mid \phi : \Sigma_0 \to \widetilde{M} \text{\ is a proper separating embedding of\ } \Sigma_0 \text{\ in\ } \widetilde{M} \}, \]
and
\[ \mathscr{Y}(\Sigma_0) = \{\phi(\Sigma_0)\mid\phi : \Sigma_0 \to \tilde{M} \text{\ is a smooth map whose image is a 0-or 1-dimensional graph}\}. \]
We write
\[ \overline{\mathscr{X}}(\Sigma_0) = \mathscr{X}(\Sigma_0) \cup \mathscr{Y}(\Sigma_0). \]
Endow $\overline{\mathscr{X}}(\Sigma_0)$ with the \emph{unoriented} smooth topology for immersions.

Let $\widetilde{\overline{\mathscr{X}}}(\Sigma_0)$ (respectively $\widetilde{\mathscr{X}}(\Sigma_0)$ and $\widetilde{\mathscr{Y}}(\Sigma_0)$) denote the same sets endowed with the \emph{oriented} smooth topology. Note that $\widetilde{\overline{\mathscr{X}}}(\Sigma_0)$ is a nontrivial double cover of $\overline{\mathscr{X}}(\Sigma_0)$. Let $\pi : \widetilde{\overline{\mathscr{X}}}(\Sigma_0) \to \overline{\mathscr{X}}(\Sigma_0)$ be the covering map (given by forgetting the orientation). Let $\overline{\lambda} \in H^1(\overline{\mathscr{X}}(\Sigma_0), \Z_2)$ be the dual to the nontrivial element in $\pi_1(\overline{\mathscr{X}}(\Sigma_0))$ associated to the covering map $\pi$.

Let $X \subset I(m,k_0)$ be a cubical complex and $Z \subset X$ a subcomplex. Fix a continuous map
\[ \Phi_0: X \to \overline{\mathscr{X}}(\Sigma_0) \text{\ \ satisfying\ \ } \Phi_0(Z) \subset \mathscr{Y}(\Sigma_0). \]
We call such a map a \emph{sweepout}. Let $\Pi$ denote the set of all continuous maps homotopic to $\Phi_0$ relative to $\Phi_0\mid_{Z}$, which we call the \emph{$(X,Z)$-homotopy class of $\Phi_0$}. We define
\[ \mathbf{L}_M(\Pi) = \inf_{\Phi \in \Pi} \sup_{x \in X} \mathcal{H}^2(\Phi(x) \cap M). \]

\begin{theorem}[Multiplicity one for classical min-max]\label{thm:classical M1}
    Let $X \subset I(m, k_0)$ be a cubical complex and $Z \subset X$ a subcomplex. Let $\Phi_0 : X \to \overline{\mathscr{X}}(\Sigma_0)$ be a sweepout and let $\Pi$ be the $(X, Z)$-homotopy class of $\Phi_0$. Assume
    \[ \mathbf{L}_M(\Pi) > \max_{x \in Z} \mathcal{H}^2(\Phi_0(x) \cap M) = 0. \]
    Then there exists a pairwise disjoint collection of connected, compact, smooth, almost properly embedded minimal surfaces with (possibly empty) free boundary $\Gamma = \cup_{i=1}^N \Gamma_i$ in $M$ and positive integers $\{m_i\}_{i=1}^N$, so that
    \[ \mathbf{L}_M(\Pi) = \sum_{i=1}^N m_i\mathcal{H}^2(\Gamma_i), \]
    and
    \begin{enumerate}
        \item if $\Gamma_i$ is two-sided and unstable, then $m_i = 1$,
        \item if $\Gamma_i$ is one-sided, then the connected double cover of $\Gamma_i$ is stable.
    \end{enumerate}
Furthermore, if $M$ is orientable and has mean convex boundary, then
\begin{equation}
    \beta_1(\Gamma)\leq \beta_1(\Sigma_0),\quad   \sum_{i\in I_O}m_i\mk g(\Gamma_i)+\frac{1}{2}\sum_{i\in I_U}m_i(\mk g(\Gamma_i)-1)\leq \mk g(\Sigma_0),
\end{equation}
where $\beta_1(\Gamma)$ is the first Betti number of $\Gamma$, $\mk g(\Gamma)$ is the genus of $\Gamma$, and $I_O$ (resp. $I_U$) is the collection of $i$ such that $\Gamma_i$ is orientable (resp., non-orientable).
\end{theorem}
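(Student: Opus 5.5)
The plan is to reduce Theorem \ref{thm:classical M1} to the relative multiplicity one result, Theorem \ref{thm:rel_mult_one}, via the double cover lifting argument of \cite{Wang-Zhou23}*{\S7.2}.

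\textbf{Step 1: lifting the sweepout.} Let $\pi:\widetilde X\to X$ be the double cover induced by pulling back the orientation cover $\pi:\widetilde{\overline{\mathscr X}}(\Sigma_0)\to\overline{\mathscr X}(\Sigma_0)$ along $\Phi_0$, i.e. the cover classified by $\Phi_0^*\overline\lambda$. After refining $X$, $\widetilde X$ is again a cubical complex in some $I(m,k_0')$, and we set $\widetilde Z=\pi^{-1}(Z)$. The lift $\widetilde\Phi_0:\widetilde X\to\widetilde{\overline{\mathscr X}}(\Sigma_0)$ assigns to each point an oriented surface. On $\pi^{-1}(\Phi_0^{-1}(\mathscr X(\Sigma_0)))$, an oriented separating proper embedding determines a unique region $\Omega$ (the side into which the normal points), so we obtain a pair in $\ms E$. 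On $\widetilde Z$ the surfaces are graphs with zero area, and the associated $\Omega$ is $\emptyset$ or $\widetilde M$.

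\textbf{Step 2: working in the pair setting.} Points in $\Phi_0^{-1}(\mathscr Y)\setminus Z$ may be degenerate, so exactly as in \cite{Wang-Zhou23} I would perturb or thicken near them. Since the area of a graph is zero and its $\mathcal F$-limit pair is $(0,\emptyset)$ or $(0,\widetilde M)$, this gives a continuous map into $\VC$ that is approximable by $\ms E$-valued maps. Homotopies in $\Pi$ rel $Z$ lift to homotopies of lifts rel $\widetilde Z$, and conversely equivariant ones descend. Because the area is insensitive to orientation, this yields $\mf L_M(\widetilde\Pi)=\mf L_M(\Pi)$ for the $(\widetilde X,\widetilde Z)$-homotopy class $\widetilde\Pi$ of $\widetilde\Phi_0$. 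The inequality $\le$ is immediate by lifting; the inequality $\ge$ follows because the relative theory with $h\equiv0$ only uses outward isotopic deformations, which commute with the deck involution (reversing orientation and replacing $\Omega$ by its complement). Thus the min-max of the lifted, involution-invariant family is realized by lifts of maps in $\Pi$. The hypothesis $\mf L_M(\Pi)>0=\max_{Z}\mathcal H^2$ then gives the hypothesis of Theorem \ref{thm:rel_mult_one}.

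\textbf{Step 3: applying the relative result.} Applying Theorem \ref{thm:rel_mult_one} to $\widetilde\Pi$ yields $\Gamma=\cup\Gamma_i$ with the multiplicity and stability conclusions and the genus and Betti bounds; these transfer verbatim since $\mf L_M(\widetilde\Pi)=\mf L_M(\Pi)$. For the topological bounds, the min-max sequence in $\widetilde\Pi$ projects to one in $\Pi$ consisting of embedded copies of $\Sigma_0$, to which \cite{Franz-Schulz-boundary-components}*{Theorem 1.8} applies.

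\textbf{Main obstacle.} The main difficulty is the equality of widths together with the compatibility of the lifted family with the saturation and $h$-perturbation used in the proof of Theorem \ref{thm:rel_mult_one}. For $h\ne0$, $\A^{\varepsilon h}_M$ is \emph{not} invariant under $\Omega\mapsto \widetilde M\setminus\Omega$. However, we only need $\mf L^{\varepsilon_k h}(\widetilde\Pi)\to\mf L(\widetilde\Pi)=\mf L_M(\Pi)$, which holds since $|\A^{\varepsilon h}-\mathcal H^2|\le\varepsilon\|h\|_{L^1}$ uniformly. The perturbed min-max is then run on $\widetilde\Pi$ itself, and no descent is needed.
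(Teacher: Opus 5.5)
Your top-level plan, lifting through the orientation double cover and applying Theorem~\ref{thm:rel_mult_one} to the lift, is what the paper does by deferring to \cite{Wang-Zhou23}*{Theorem 7.3}. However, the step that carries all the weight, $\mathbf L_M(\widetilde\Pi)=\mathbf L_M(\Pi)$ for the full $(\widetilde X,\widetilde Z)$-homotopy class of the lift, is false once the cover is nontrivial. That is exactly the case that matters (e.g.\ $\Psi_2,\Psi_3$). Your justification confuses two things. A single diffeomorphism does commute with $(\Sigma,\Omega)\mapsto(\Sigma,\widetilde M\setminus\Omega)$. But an element of $\widetilde\Pi$, or of a saturated family, comes from a family $\tilde x\mapsto\psi_{\tilde x}$, and nothing forces $\psi_{\tau\tilde x}=\psi_{\tilde x}$ for the deck involution $\tau$. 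Only equivariant competitors descend to $\Pi$. Indeed $\pi^*\Phi_0^*\overline\lambda=0$ on $\widetilde X$, so the lifted class no longer sees the topology that forces $\mathbf L_M(\Pi)$ to be large.

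Here is a concrete instance. Take $\Phi_0=\Psi_2$. The lift lives on the annulus $[-1,1]\times S^1$, with $\widetilde\Phi_0(a_0,\theta)=\{x\cdot e_\theta=a_0\}$ and $e_\theta=(\cos\theta,\sin\theta,0)$.
\begin{itemize}
\item Let $R_\theta\in SO(3)$ be the rotation by $\pi/2$ about $e_\theta\times e_3$, so that $R_\theta e_\theta=e_3$.
\item The loop $\theta\mapsto R_\theta$ lifts to a closed loop in $S^3$, so it is null-homotopic in $SO(3)$.
\item Hence $\psi_{(a_0,\theta)}:=R_\theta$ for $|a_0|\le 1-2\delta$, interpolated to $\Id$ near $\partial\widetilde X$, is an admissible deformation. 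On a general ball, conjugate by a fixed diffeomorphism to $\mathbb B^3$ that sends horizontal planes to horizontal slices.
\item The result $\psi_\#\widetilde\Phi_0$ is the $\theta$-independent family of slices $\{x_3=a_0\}$, plus tiny caps near the boundary.
\end{itemize}
So $\mathbf L_M(\widetilde\Pi)$ is bounded by the maximal area of a single one-parameter family of slices, while $\mathbf L_M(\Pi)\ge\mathbf L(\mathscr P_2)$. For a long, thin convex body elongated in the $x_3$-direction, the former is about the cross-sectional area. The latter is at least about twice that, since by a Borsuk--Ulam argument every member of $\mathscr P_2$ contains a disk bisecting both ends. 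Thus Theorem~\ref{thm:rel_mult_one} applied to $\widetilde\Pi$ only gives a multiplicity-one solution at the possibly lower level $\mathbf L_M(\widetilde\Pi)$. Your remark that $\mathbf L^{\varepsilon_k h}(\widetilde\Pi)\to\mathbf L(\widetilde\Pi)$ does not address this.

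Your setup is caught between two options. On equivariant lifts the width is correct, but that family is not saturated. As you note yourself, $\mathcal A^{\varepsilon h}$ is not involution-invariant, so the $\varepsilon h$-tightening and almost-minimizing deformations cannot be kept equivariant. On the full class those deformations are allowed, but the width can drop. What is missing is a lifted, saturated family whose width is provably still $\mathbf L_M(\Pi)$.

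One natural device is to lift only over a subcomplex $K\subset X\setminus Z$ such that:
\begin{itemize}
\item $\Phi^*\overline\lambda$ vanishes on $K$, and
\item off $K$ the area is already below $\mathbf L_M(\Pi)$.
\end{itemize}
For a strongly bumpy metric one would try a neighbourhood of the finitely many candidate limit varifolds, using the Marques--Neves fact, as in Appendix~\ref{appen:LS inequality}, that such parameters do not detect $\overline\lambda$. Then every competitor in the $(K,\partial K)$-class glues back into $\Pi$. The relative width is therefore squeezed between $\mathbf L_M(\Pi)$ and $\max\mathcal H^2\circ\Phi$, Theorem~\ref{thm:rel_mult_one} applies on $K$, and one passes to the limit along a minimizing sequence. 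Some argument of this kind is the real content of the double-cover step. Your Step~2 replaces it with an equality that does not hold.
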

\begin{proof}
The proof in the Almgren--Pitts framework is originally from \cite{Zhou19}*{Theorem 5.2}, which was generalized in \cite{SWZ}*{Theorem 4.7} to the free boundary setting. For the case of Simon--Smith min-max theory, the theorem follows the proof of \cite{Wang-Zhou23}*{Theorem 7.3} essentially verbatim, replacing \cite{Wang-Zhou23}*{Theorem 7.2} with our Theorem \ref{thm:rel_mult_one}.
\end{proof}

\section{Existence of free boundary minimal disks} \label{sec:disks}
\subsection{Sweepouts in three-balls}\label{subsec:three sweepouts}
In this section, we will first recall the fact that the three-ball always admits a nontrivial $k$-parameter ($k=1,2,3$) sweepout of disks. 
These sweepouts will be used to prove the existence of free boundary minimal disks by applying the Multiplicity One Theorem for Simon--Smith min-max theory (Theorem \ref{thm:classical M1}). To show that these sweepouts will produce three different surfaces, we will recall the Lusternik–Schnirelmann theory.

Let $\mb{B}^3\subset\mb R^3$ be the standard unit round ball, and $x_1, x_2, x_3$ be the three coordinate functions. Consider the spaces
\[
\ms X:=\{\phi(\mb D)\big|\phi: \mb D\to \mb B^3 \text{ is a smooth proper embedding}\},
\]
and 
\[
\ms Y:=\{\phi(\mb D)\big|\phi:\mb D\to\mb B^3 \text{ is a smooth map whose image is a point or an interval}\}.
\]
Denote by $\oli{\ms X}=\ms X\cup \ms Y$ endowed with un-oriented smooth topology. 

For each $i=1, 2, 3$, let $\ms P_i$ be the collection of continuous maps $\Phi: X\to \oli{\ms X}$,  
with $\Phi(Z)\subset \ms Y$, such that there exists $\oli\lambda\in H^1(\oli {\ms X},\ms Y;\mb Z_2)$ for which $\Phi^*(\oli\lambda)^i \neq 0 \in H^i(X, Z_0;\mb Z_2)$.

%By Hatcher’s proof of Smale’s conjecture \cite{Hat83}*{Appendix (14)}, $\oli{\ms X}$ is homotopy equivalent to $\mb {RP}^2$; see \cite{HK19}*{Section 2}.Denote by $\ms P_i$, $i=1, 2, 3$, the collection of continuous $\Phi: (X, Z_0)\to \oli{\ms X}$, so that $\Phi(Z_0)\subset \ms Y$, and\[  \Phi^*(H^i(\oli{\ms X},\ms Y;\mb Z_2))\neq 0\in  H^i(X, Z_0;\mb Z_2). \] 

\begin{comment}
Next we describe three explicit sweepouts that detect the four nontrivial cohomology classes in $H^*(\oli{\ms X}, \partial\ms X, \mb Z_2)$. 
We use $[-1, 1]\ttimes \mb{RP}^2$ to denote the twisted $[-1,1]$-bundle over $\mb{RP}^2$, and $[a_0, a_1,a_2,a_3]$ to denote a point in $[-1, 1]\ttimes\mb{RP}^2$, that is $a_0\in [-1, 1]$, $a_1^2+a_2^2+ a_3^2=1$, and $(a_0, a_1, a_2, a_3)$ is identified with $(-a_0, -a_1, -a_2, -a_3)$. 
When ${\color{red}a_0\neq \pm 1}$, we denote
\[ \mc G([a_0, a_1,a_2,a_3]):=\{a_1x_1+a_2x_2+a_3x_3= a_0\}\cap \mb B^3;\]
when $a_0=\pm 1$, $\mc G(a_0, a_1, a_2, a_3)$ denotes a point given by $\pm(a_1, a_2, a_3)\in \partial\mb B^3$.
\end{comment}

Next we describe three explicit sweepouts that belong to $\ms P_i$ for each $i=1, 2, 3$.  
We use $[-1, 1]\ttimes \mb{RP}^2$ to denote the twisted $[-1,1]$-bundle over $\mb{RP}^2$, and $[a_0, a_1,a_2,a_3]$ to denote a point in $[-1, 1]\ttimes\mb{RP}^2$; that is $a_0\in [-1, 1]$, $a_1^2+ a_2^2 + a_3^2=1$, and $(a_0, a_1, a_2, a_3)$ is identified with $(-a_0, -a_1, -a_2, -a_3)$. 
When $a_0\neq \pm 1$, we denote
\[ \mc G([a_0, a_1,a_2,a_3]):=\{a_1x_1+a_2x_2+a_3x_3= a_0\}\cap \mb B^3;\]
when $a_0=\pm 1$, $\mc G(a_0, a_1, a_2, a_3)$ denotes a point given by $\pm(a_1, a_2, a_3)\in \partial\mb B^3$.

To detect the nontrivial cohomology classes, we construct explicit sweepouts parametrized by the twisted $[-1,1]$-bundle over $\mb{RP}^{i-1}$. We now define three maps:
\begin{gather*} 
\Psi_1: [-1,1]\ttimes \mb{RP}^0\to \oli{\ms X}, \quad a_0 \longmapsto \mc G(a_0,1,0,0);\\
\Psi_2: [-1,1]\ttimes \mb{RP}^1\to \oli{\ms X}, \quad [a_0,a_1,a_2]\longmapsto \mc G(a_0,a_1,a_2,0);\\
\Psi_3: [-1,1]\ttimes\mb{RP}^2\to \oli{\ms X}, \quad [a_0,a_1,a_2,a_3]\longmapsto \mc G(a_0,a_1,a_2,a_3).
\end{gather*}
For simplicity, we use $\mc X_i$ to denote $[-1,1]\ttimes\mb {RP}^{i-1}$, (and as compared with our definition of $\ms P_i$, $Z_0=\partial \mc X_i$ for each $i$.) Let $f: [-1, 1] \to \mc X_i$ be an arbitrary embedding of a fiber of the $[-1,1]$-bundle.

We now show that $\Psi_i\in\ms P_i$ for $i=1,2,3$. Denote by $\iota: \oli{\ms X} \to \mc Z_2( B^3,\partial B^3; \mb Z_2)$ the natural inclusion map into the space of mod-2 relative integral cycles. Note that $\iota(\mc Y) = \{0\}$, that is, the image of each element in $\mc Y$ is a zero relative cycle.  Consider the chain of maps:
\[
\begin{tikzcd}
({[-1, 1]}, \partial{[-1, 1]}) \arrow[r, "f"]
&(\mc X_i, \partial \mc X_i) \arrow[r, "\Psi_i"]
&(\oli{\ms X}, \ms Y) \arrow[r, "\iota"] 
&(\mc Z_2(B^3,\partial B^3;\mb Z_2),\{0\});
\end{tikzcd}
\]
the composition map $\iota\circ \Psi_i\circ f: ([-1, 1], \partial{[-1, 1]}) \to (\mc Z_2(B^3,\partial B^3;\mb Z_2),\{0\})$ is then a sweepout in the sense of Almgren; see \cite{Alm62}, \cite{Marques-Neves16}*{Definition 3.4} and \cite{Zhou15}*{Theorem 5.8}. Therefore, 
we know that $(\iota\circ \Psi_i\circ f)^*: H^1(\mc Z_2(B^3,\partial B^3;\mb Z_2),\{0\}; \mb Z_2) \to H^1([-1, 1], \partial{[-1, 1]}; \mb Z_2) = \mb Z_2$ is nontrivial, and by the chain of pull-back maps:
\[  
\begin{tikzcd}
H^1(\mc Z_2(B^3,\partial B^3;\mb Z_2),\{0\};\mb Z_2) \arrow[r,"\iota^*"] &H^1(\oli{\ms X},\ms Y;\mb Z_2)\arrow[r,"\Psi_i^*"] &H^1(\mc X_i,\partial \mc X_i;\mb Z_2)\\ \arrow[r,"f^*"]
&H^1([-1, 1], \partial[-1, 1]; \mb Z_2),
\end{tikzcd}
\]
we also know that $\Psi_i^*: H^1(\oli{\ms X}, \ms Y; \mb Z_2) \to H^1(\mc X_i, \partial\mc X_i; \mb Z_2)$ is nontrivial. This together with the structure of the relative cohomology ring 
\[
H^*(\mc X_i,\partial \mc X_i;\mb Z_2)\simeq \mb Z_2[\alpha]/[\alpha^{i+1}] \] 
implies that $\Psi_i\in \ms P_i$ for $i=1,2,3$.

Recall that 
 \[ \mf L(\ms P_i):=\inf_{\Phi\in \ms P_i}\sup_{x\in \mr{dom} \Phi}\mc H^2(\Phi(x)).\] 

The next result follows from Lusternik–Schnirelmann theory.
\begin{lemma}[\citelist{\cite{HK19}*{Theorem 5.2} \cite{Wang-Zhou23}*{Lemma 8.3}}]\label{lem:LS theory}
Let $(M,g)$ be a Riemannian 3-ball with strictly mean convex boundary.
Suppose that $(B^3,\partial B^3,g)$ contains only finitely many embedded minimal two-spheres and free boundary minimal disks. Then 
\[   0<\mf L(\ms P_1)<\mf L(\ms P_2)<\mf L(\ms P_3).  \]
\end{lemma}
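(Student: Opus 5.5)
We establish the three inequalities separately: positivity of $\mf L(\ms P_1)$ by an isoperimetric argument, and the strict inequalities $\mf L(\ms P_i)<\mf L(\ms P_{i+1})$ by the Lusternik--Schnirelmann argument of \cite{HK19}*{Theorem 5.2} and \cite{Wang-Zhou23}*{Lemma 8.3}, adapted to the free boundary setting.

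\emph{Positivity.} We use the chain of maps into $\mc Z_2(M,\partial M;\mb Z_2)$ from the previous subsection. For any $\Phi\in\ms P_1$, the cohomology condition gives a path in $X$ along which $\iota\circ\Phi$ is an Almgren sweepout. Along this path, the enclosed volume (relative to $\partial M$) passes continuously from $0$ to $\Vol(M)$, so some slice encloses exactly half the volume. The relative isoperimetric inequality on $(M,\partial M,g)$ then bounds the area of that slice below by a constant $c(M,g)>0$. Hence $\mf L(\ms P_1)\ge c>0$. Monotonicity $\mf L(\ms P_i)\le\mf L(\ms P_{i+1})$ is immediate, since $\ms P_{i+1}\subset\ms P_i$: if $\Phi^*(\oli\lambda)^{i+1}\neq0$ then $\Phi^*(\oli\lambda)^{i}\ne 0$.

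\emph{Strict inequality.} Argue by contradiction. Suppose $\mf L(\ms P_i)=\mf L(\ms P_{i+1})=:L$, and take a minimizing sequence $\Phi_j\in\ms P_{i+1}$, tightened so that every min--max sequence converges to a stationary varifold.
\begin{enumerate}
    \item By Theorem \ref{thm:classical M1} together with the genus/Betti bound (for a ball and disk sweepouts), every element of the critical set with mass $L$ is a finite sum $\sum m_k\Gamma_k$, where each $\Gamma_k$ is a minimal sphere or a free boundary minimal disk.
    \item By hypothesis there are only finitely many such surfaces, so there are only finitely many such varifolds of mass $L$; call this set $\mc S$.
    \item Choose a small $\mf F$-neighborhood $\mc N$ of $\mc S$ and put $Y_j=\{x:\ \Phi_j(x)\in\mc N\}$, with closure taken within a slightly larger neighborhood.
    \item On $X\setminus Y_j$ the area is strictly below $L-\eta$ for large $j$; otherwise we would produce a critical varifold outside $\mc N$. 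So $\Phi_j|_{\overline{X\setminus Y_j}}$ cannot lie in $\ms P_i$, i.e.\ $\Phi_j^*(\oli\lambda)^i$ vanishes on $(\overline{X\setminus Y_j},\,Z\cap\cdot)$.
\end{enumerate}

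The key step is to show that $\Phi_j^*(\oli\lambda)$ vanishes on $Y_j$ relative to $Y_j\cap Z$. For this, let $\mc N$ be a union of small disjoint balls around finitely many integral varifolds, and observe that $\oli\lambda$ is pulled back from the Almgren class in $\mc Z_2(M,\partial M;\mb Z_2)$. Its restriction to a small $\mf F$-ball around a fixed cycle is trivial, because the fundamental class $\oli\lambda$ is detected by the relative isoperimetric (volume-switching) structure, and in a small flat neighborhood of a fixed cycle no loop can sweep out $M$. One then passes from $\mf F$-closeness of varifolds to flat closeness of the associated relative cycles. Granting this, the cup product gives
\[
\Phi_j^*(\oli\lambda)^{i+1}=\Phi_j^*(\oli\lambda)^i\smile\Phi_j^*(\oli\lambda)=0,
\]
since the two factors are supported on the cover $\{\overline{X\setminus Y_j},\,Y_j\}$. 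This contradicts $\Phi_j\in\ms P_{i+1}$. Excision in the relative setting requires $Y_j$ and $X\setminus Y_j$ to be thickened to subcomplexes, which is done by refining the cubical complex.

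\emph{Main obstacle.} I expect the hardest step to be the vanishing of $\oli\lambda$ near each critical varifold. Flat limits can lose mass in the unoriented mod-$2$ setting, and a varifold with even multiplicity corresponds to the zero relative cycle. I would handle this as in \cite{Wang-Zhou23}*{Lemma 8.3}, via the Almgren isomorphism and the fact that nearby cycles lie in a contractible flat ball. The boundary portions $\partial M$ cause no extra difficulty, because the cycles are relative to $\partial M$.
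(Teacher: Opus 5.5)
Your overall architecture matches the paper's: a finite set $\mc S$ of regular critical varifolds, a near/far splitting of the parameter space, vanishing of the class near $\mc S$, and a cup-product count. The paper argues directly that the far part still lies in $\ms P_i$, while you argue by contradiction, and the two are equivalent. However, two steps do not go through as written.

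\textbf{The min--max input in steps 1 and 4.} Step 1 misstates the min--max theorem. Theorem \ref{thm:classical M1} only guarantees that \emph{some} element of the critical set of a pulled-tight minimizing sequence is a sum of minimal spheres/disks. Other critical varifolds of mass $L$ may be non-regular stationary varifolds, and the finiteness hypothesis says nothing about them. So the claim in step 4, that the area on $\overline{X\setminus Y_j}$ is eventually below $L-\eta$, is unjustified and need not hold.

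What is true, and suffices, is the following. Suppose $\Phi_j|_{\overline{X\setminus Y_j}}\in\ms P_i$ for infinitely many $j$. Since $\mf L(\ms P_i)=L$, these restrictions form a pulled-tight minimizing sequence for $\ms P_i$. Applying the min--max theorem to \emph{this} sequence puts an element of $\mc S$ in its critical set. That is impossible, because these slices stay outside $\mc N$. This is exactly what the paper's final ``after the tightening process'' step encodes.

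\textbf{The near-vanishing step.} Here you assert that $\oli\lambda$ is pulled back from the Almgren class. In the definition of $\ms P_i$, however, $\oli\lambda\in H^1(\oli{\ms X},\ms Y;\mb Z_2)$ is arbitrary. Nothing you wrote forces its restriction near $\mc S$ to be the Almgren class.

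The paper supplies the missing topological input:
\begin{enumerate}
\item Near $\mc S$ the slices have mass close to $L>0$, so the maps land in $\ms X$ rather than $\ms Y$.
\item $\ms X\simeq\mb{RP}^2$, by Hatcher's proof of the Smale conjecture \cite{Hat} together with Smale's theorem on $\mr{Emb}(S^1,S^2)$.
\item Hence $\iota^*:H^1(\mc Z_2(M,\partial M;\mb Z_2);\mb Z_2)\to H^1(\ms X;\mb Z_2)$ is an isomorphism.
\item So the restriction of \emph{every} class to the near part factors through the Almgren class, which vanishes there by \cite{MN17}*{Section 6}.
\end{enumerate}
Step 4 is where your finitely-many-flat-balls sketch belongs. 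Without the fact $\ms X\simeq\mb{RP}^2$, your key step has no justification. Your positivity argument tacitly makes the same identification when it extracts an Almgren sweepout from $\Phi^*(\oli\lambda)\neq 0$.
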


\subsection{Non-compact manifolds with cylindrical ends}\label{subsec:cylindrical ends}
In this section, we recall the construction of non-compact manifolds with cylindrical ends by A. Song in \cite{Song18}*{Section 2.2}.
Let $(N,\partial_{rel}N\cup T,g)$ be a compact Riemannian manifold with disjoint boundaries $T$ and $\partial_{rel}N$.
Suppose that $T$ is a closed, embedded, stable minimal surface with \textit{a contracting neighborhood}, that is, there exist $\mu>0$ and a map 
\[ \varphi:T\times [0,\mu ]\to N,\]
so that $\varphi$ is a diffeomorphism to its image, $\varphi(T\times \{0\})=\partial N$, and for all $t\in (0,\hat t]$, $\varphi(T\times \{t\})$ has non-zero mean curvature vector\footnote{Here the mean curvature vector is defined as $-\dv(\nu)\nu$ for a choice of unit normal $\nu$.} pointing towards $T$. We endow $T\times [0,+\infty)$ with the product metric. Let $\ms C(N)$ be the non-compact manifold
\[   N\cup (T\times [0,+\infty))  \]
by identifying $T$ with $T\times \{0\}$. We endow it with the metric $\hat g$ such that $\hat g=g$ on $N$ and is equal to the product metric on $T\times [0,\infty)$. Note that $\hat g$ is Lipschitz continuous. 

Now we approximate $\ms C(N)$ by compact manifolds as follows. %Fix $q\in N\setminus \partial N$. 
Let $N_\epsilon:=N\setminus \varphi(T\times [0,\epsilon))$. Denote by $\nu$ the unit outward normal vector field of $\partial N_\epsilon$. For a small constant $\delta_\epsilon>0$, the map
\[   \gamma_\epsilon:(\partial N_\epsilon\setminus \partial_{rel}N)\times [-\delta_\epsilon,0]\to N_\epsilon , \quad (x,t)\longmapsto \exp(x,t\nu) \]
is well-defined and gives Fermi coordinates on one side of $\partial N_\epsilon$. Then by A. Song \cite{Song18}*{Section 2.2}, there exist smooth metrics $g_\epsilon$ on $N_\epsilon$ satisfying Lemma 4, Lemma 5 and Lemma 7 in \cite{Song18}, such that $(N_\epsilon, g_\epsilon)$ approaches $(\ms C(N),\hat g)$ in an appropriate sense. In particular, 
\begin{enumerate}[label=(\roman*)]
    \item $g_\epsilon = g$ in $N_\epsilon\setminus \gamma_\epsilon(\partial N_\epsilon\times [-\delta_\epsilon, 0])$;
    \item for $t \in [-\delta_\epsilon,0]$, the slices $\gamma_\epsilon(\partial N_{\epsilon}\times  \{t\})$ have non-zero mean curvature vector pointing towards $\partial N_\epsilon$ with respect to the new metric $g_\epsilon$;
    \item\label{item:gepsilon=g along surfaces} $\gamma_\epsilon^*(g_\epsilon)=\gamma_\epsilon^*(g)$ on $\partial N_\epsilon\times \{t\}$ for all $t\in [-\delta_\epsilon,0]$.
\end{enumerate}

Assume that $(N,\partial N,g)$ is isometrically embedded into a closed three-manifold $(M^3,\wti g)$. Then one can extend the metric $g_\epsilon$ to a metric $\wti g_\epsilon$ on $M$ so that $\wti g_\epsilon=g_\epsilon$ on $N_\epsilon$.

\subsection{A dichotomy in mean convex balls}

Let $N$ be a compact manifold with boundary $\partial N=\partial_{rel}N\cup T$. Suppose that $T$ is a stable minimal surface. 
Let $(\ms C(N),\hat g)$ be defined in Section \ref{subsec:cylindrical ends}. Recall that it can be approached by $(N_\epsilon, g_\epsilon)$ in an appropriate sense.

\begin{lemma}\label{lem:existence of fbmd}
Let $(N,\partial_{rel}N \cup T,g)$ be as above. Suppose that 
\begin{itemize}
    \item $N$ is diffeomorphic to a ball with finitely many disjoint balls removed;
    \item $\partial_{rel}N$ is mean convex and $T$ consists of stable minimal spheres with contracting neighborhoods;
    \item there is no embedded minimal sphere in $N\setminus \partial N$.
\end{itemize}
 Then there exists an embedded minimal disk in $N$ with free boundary on $\partial_{rel}N$.
\end{lemma}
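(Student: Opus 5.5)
We run the one-parameter min--max for disk sweepouts on the approximating compact manifolds $(N_\epsilon, g_\epsilon)$ of Section \ref{subsec:cylindrical ends}, bound the widths uniformly in $\epsilon$, and pass to the limit $\epsilon\to 0$. The boundary of $N_\epsilon$ splits into $\partial_{rel}N$, which is mean convex, and $\partial N_\epsilon\setminus\partial_{rel}N$, which by (ii) is foliated near the boundary by surfaces whose mean curvature vectors point toward $\partial N_\epsilon$. Since these components are spheres, we fill each of them with a ball to obtain a three-ball $\wti N_\epsilon$ with $\partial\wti N_\epsilon=\partial_{rel}N$. We then run the theory of Theorem \ref{thm:relative min-max in any compact domain} and Theorem \ref{thm:classical M1} with $\partial_{mc}=\partial_{rel}N$ and $\partial_{other}=\partial N_\epsilon\setminus\partial_{rel}N$. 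For the sweepout we take a one-parameter family of properly embedded disks in $\wti N_\epsilon$ with free boundary on $\partial_{rel}N$, for example the pullback of $\Psi_1$ under a diffeomorphism $\wti N_\epsilon\cong\mb B^3$. We fix this diffeomorphism independently of $\epsilon$ on $N$ and let it vary only in the cylindrical collar. The homotopy class $\Pi_\epsilon$ is nontrivial by the Almgren isomorphism argument of Section \ref{subsec:three sweepouts}, so $\mf L_{N_\epsilon}(\Pi_\epsilon)>0$. Since $\Phi_0(Z)$ consists of points, the hypothesis $\max_Z=0$ holds.

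\emph{Uniform width bounds.} For the upper bound, we arrange that the slices meet the collar region $N\setminus N_\epsilon$ (which has nearly product geometry) only in a controlled way. Using property \ref{item:gepsilon=g along surfaces} together with Song's Lemmas 4, 5 and 7, the area of each slice inside $N_\epsilon$ is bounded by a constant $C$ independent of $\epsilon$. Concretely, inside the collar we replace the planar slices by slices of the form (a disk in $N$) union (a cylinder over a curve in $T$) whose area is bounded independently of $\epsilon$; alternatively we sweep across each removed ball by pushing disks through $\partial_{other}$. For the lower bound, we use an isoperimetric/volume-type argument on the fixed region $N_{\epsilon_0}\subset N_\epsilon$: some slice must split $N_{\epsilon_0}$ into two pieces of comparable volume, so $\mf L\ge c>0$ uniformly in $\epsilon$.

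\emph{Min--max and limit.} Theorem \ref{thm:classical M1} produces, for each $\epsilon$, a collection $\Gamma^\epsilon$ of minimal surfaces in $(N_\epsilon,g_\epsilon)$ with free boundary on $\partial_{rel}N$ (or on $\partial_{other}$). Its total area is at most $C$, and by the genus bound (with $\beta_1(\mb D)=0$, $\mk g=0$) each component is a disk or a sphere. Next we apply the maximum principle with the foliation (ii): the surfaces cannot touch the collar slices, so $\Gamma^\epsilon$ avoids the modified collar and lies where $g_\epsilon=g$. It also stays away from $\partial_{other}$, so no components with boundary on $\partial_{other}$ occur. The monotonicity estimates of Song prevent $\Gamma^\epsilon$ from escaping into the cylindrical end, since bounded area forces it to stay within bounded distance of $N$. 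As $\epsilon\to0$, the compactness Theorem \ref{thm:compactness and jacobi fields} yields a nontrivial limit $\Gamma$ in $N$ of total area at least $c>0$, consisting of embedded minimal spheres and free boundary disks. The limit is nontrivial because the area lower bound persists and the mass cannot concentrate in the ends.

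\emph{Excluding spheres.} A sphere component lying in $N\setminus\partial N$ is excluded by hypothesis. A sphere component touching $T$ must coincide with a component of $T$ by the maximum principle. We rule this out as Song does: the contracting-neighborhood/product-end structure shows that a min--max limit cannot be a cross-section of the cylinder, because such a cross-section could be pushed into the end, contradicting the width lower bound. Hence some component of $\Gamma$ is a free boundary minimal disk in $N$, which proves the lemma. The main obstacle is the uniform-in-$\epsilon$ control: keeping the width bounded above while ensuring that min--max surfaces neither drift into the cylindrical ends nor converge to components of $T$. This is where Song's lemmas must be invoked carefully.
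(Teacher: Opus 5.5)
Your overall architecture matches the paper's: approximate by $(N_\epsilon,g_\epsilon)$, fill the non-relative spheres to get a ball, run disk min--max with $\epsilon$-uniform bounds, keep the surfaces off $\partial N_\epsilon\setminus\partial_{rel}N$ via the leaves and monotonicity, and pass to the limit via Song. However, the step that carries the lemma, the $\epsilon$-independent upper bound on the width, is not supplied, and the construction you sketch fails. In $g_\epsilon$ the modified collar $\gamma_\epsilon(\partial N_\epsilon\times[-\delta_\epsilon,0])$ is close to a product $T\times[0,L_\epsilon]$ with $L_\epsilon\to\infty$; this is exactly what your later monotonicity argument exploits. Property (iii) controls only the areas of the level spheres $\gamma_\epsilon(\Gamma_i\times\{t\})$. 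Any slice crossing the collar transversally has area of order $\mathrm{length}(c)\,L_\epsilon$, which is unbounded. This includes a pulled-back plane of $\Psi_1$ and your ``disk in $N$ union a cylinder over a curve $c\subset T$''. Pushing disks out through $\partial_{other}$ does not avoid the collar either, since it lies inside $N_\epsilon$, where area is counted.

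The missing idea is to sweep the collar \emph{by the level spheres themselves}. The paper continues $\gamma_\epsilon(\Gamma_i\times\{t\})$ by a sphere sweepout of the filled ball $B_i$ with area at most $\mc H^2(\Gamma_i)$. It joins $\Gamma_0,\dots,\Gamma_k$ by thin tubes along curves $\gamma_i$, giving a ball $\mk B_\epsilon$ swept by spheres of area $<\mc H^2(T)+2$. It sweeps the complement $\wti N_\epsilon\setminus\mk B_\epsilon$, where $\wti g_\epsilon=g$, by spheres with an $\epsilon$-independent bound. Finally it opens necks to get a disk sweepout in the nontrivial class.

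There are two further problems. First, filling the components of $\partial N_\epsilon\setminus\partial_{rel}N$ yields a ball with $\partial\wti N_\epsilon=\partial_{rel}N$ only if $\partial_{rel}N$ is connected, and that is not a hypothesis. The paper proves it first: otherwise, minimizing area in the isotopy class of one component of the mean convex $\partial_{rel}N$ produces a minimal sphere in $N\setminus\partial N$, contradicting the third hypothesis.

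Second, your maximum-principle step is reversed. The leaves' mean curvature vectors point toward $\partial N_\epsilon$, away from the side on which a surface entering the collar lies. So min--max surfaces can enter the collar and need not lie where $g_\epsilon=g$. What the foliation actually gives is that a component meeting $\partial N_\epsilon\setminus\partial_{rel}N$ must cross every leaf, which monotonicity and the area bound then exclude. Confinement of the limit to $N\setminus T$ comes from Song's analysis.

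Likewise, you exclude components of $T$ because a cross-section `could be pushed into the end, contradicting the width lower bound', but this is not an argument. Translating a cross-section along the product end does not change its area, and your isoperimetric constant $c$ is never compared with $\mc H^2(T)$.
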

\begin{proof}
Since there is no embedded minimal sphere in $N\setminus \partial N$, we know that $\partial_{rel}N$ is connected. Otherwise, let $\Gamma\subset \partial_{rel}N$ be a connected component and consider the isotopy area minimizer starting from $\Gamma$. Then the area minimizer $\Gamma'$ is a stable minimal surface and each connected component is a minimal sphere. %\textcolor{red}{(Argue that there exists at least one component distinct with $T$.)} 
Since $\partial_{rel}N$ is mean convex and $\Gamma'$ separates $\partial_{rel}N$, $\Gamma'$ must have a connected component in the interior of $N$, which contradicts the assumptions.

%Here $\partial_{rel}N$ is connected, but $T$ may not be connected.

\medskip
{\noindent\em Step I: Constructing balls to approximate $\ms C(N)$.}

\smallskip
Consider $\ms C(N)$ the non-compact $C^1$ manifold (endowed with the metric $\hat g$ which is defined separately on $N$ and ends) by adding cylindrical ends to $T$ as above. Then there exists $(N_\epsilon,g_\epsilon)$ converges to $(\ms C(N),\hat g)$ in appropriate sense. Now we suppose that $\partial N_\epsilon\setminus \partial_{rel}N$ consists of connected components $\Gamma_0,\Gamma_1,\cdots,\Gamma_k$.
Moreover, one can fill-in a Riemannian ball $B_i$ to each connected component $\Gamma_i$ such that $B_i$ admits a sweepout $\{S_i(t)\}_{0\leq t\leq 1}$ of spheres with 
\[
    \mc H^2(S_i(t))\leq \mc H^2(S_i(0))=\mc H^2(\Gamma_i).
\]
Denote by 
\[
    \wti N_\epsilon= N_\epsilon \cup (\cup_i B_i), 
\]
endowed with a metric $\wti g_\epsilon$ defined by combining the metrics on $B_i$ and $g_\epsilon$ on $N_\epsilon$. Note that $\wti N_\epsilon$ is a ball with smooth boundary $\partial_{rel} N$.

We will decompose $(\wti N_\epsilon, \wti g_\epsilon)$ into two connected components as follows. Recall that $g_\epsilon=g$ on $N_\epsilon\setminus \gamma_\epsilon ((\partial N_\epsilon\setminus \partial N)\times [-\delta_\epsilon,0])$. Denote by $\Gamma_i^\epsilon:=\gamma_\epsilon(\Gamma_i\times \{-\delta_\epsilon\})$.
Take disjoint smooth curves 
\[
\{\gamma_i:[0,1]\to N_\epsilon\setminus \gamma_\epsilon ((\partial N_\epsilon\setminus \partial N)\times (-\delta_\epsilon,0])\}\]
connecting $\Gamma_0^\epsilon$ and $\Gamma_i^\epsilon$ such that 
\begin{itemize}
    \item $\gamma_i$ meets $\Gamma_0^\epsilon$ and $\Gamma_i^\epsilon$ orthogonally;
    \item $\gamma_i(0)\in \Gamma_0^\epsilon$ and $\gamma_i(1)\in \Gamma_i^\epsilon$.
\end{itemize}
Set
\[  
    \wti B_i:= B_i\cup \gamma_\epsilon(\Gamma_i\times (-\delta_\epsilon,0]).
\]
Given $\eta>0$ small enough, denote by $C_\eta(\gamma_i)\subset N_\epsilon\setminus \wti B_i$ the $\eta$-tubular open neighborhood of $\gamma_i$ in $N_\epsilon\setminus \wti B_i$. Now let 
\[   
    \mk B_\epsilon:= \bigcup_{i=0}^k(\wti B_i\cup {C_\eta(\gamma_i)}),
\]
where $\gamma_0:=\emptyset$. Then $\mk B_\epsilon$ is homeomorphic to a ball and $\wti N_\epsilon\setminus \mk B_\epsilon$ is homeomorphic to a cylinder.

\medskip
{\noindent\em Step II: Constructing sweepouts of $\mk B_\epsilon$ and $\wti N_\epsilon\setminus \mk B_\epsilon$ with uniform area upper bounds.}

\smallskip

Note that $\wti g_\epsilon=g$ on $\wti N_\epsilon \setminus \mk B_\epsilon$. Hence $\wti N_\epsilon \setminus \mk B_\epsilon$ admits a sweepout of spheres with uniform area upper bound independent of $\epsilon$.

Then by the property of $\Gamma_i$ and $B_i$, we have that $\wti B_i$ is a Riemannian 3-ball admitting a sweepout $\{\wti S_i(t)\}_{t\in(-\delta_\epsilon,1]}$ by spheres defined as
\begin{gather*}
     \wti S_i(t)= \gamma_\epsilon (\Gamma_i\times \{t\}) \quad \forall t\in(-\delta_\epsilon,0];\quad    \wti S_i(t)=S_i(t) \quad \forall t\in (0,1].
\end{gather*}
Note that 
\[
   \mc H^2(\wti S_i(t))\leq \mc H^2(\Gamma_i\times \{-\delta_\epsilon\})< \mc H^2(\Gamma_i)+\frac{1}{k+1}. 
\]
Next we take a smooth curve $\ell_i:(-\delta_\epsilon,1] \to \mr{Clos}(\wti B_i)$ such that
\[    \ell_i(-\delta_\epsilon)=\gamma_i(1),  \quad \ell_i(1)=S_i(1); \quad \ell_i(t)\in \wti S_i(t) \quad \forall t\in(-\delta_\epsilon,1].\]
Moreover, one can also ensure that $\ell_i$ (transversally) intersects $\wti S_i(t)$ at exactly one point. 

Now we take a strictly decreasing smooth function $\xi:[-\delta_\epsilon, 1] \to [0,1]$ such that:
\[
 \xi(-\delta_\epsilon)=1, \quad \xi(1)=0.
\]
For each $t\in[-\delta_\epsilon,1]$, set
\[   
    \Phi(t):=\bigcup_{i=1}^k\Big[\Big(\wti S_i(t)\setminus B_{\eta\xi(t)}(\ell_i(t))\Big)\bigcup\Big(\cup_{s\leq t} \partial B_{\eta \xi(s)}(\ell_i(s))\Big)\Big]\bigcup \Big[   \partial \mk B_\epsilon\setminus \bigcup_{i=1}^k\partial \wti B_i \Big].
\]
Indeed, $\Phi(t)$ ($t\in [-\delta_\epsilon,1]$) is given by the connected sum of $\wti S_0(-\delta_\epsilon), \wti S_1(t),\cdots, \wti S_k(t)$ with small cylinders. 
Clearly,
\[
    \mc H^2(\Phi(t)) \leq \sum_{i=0}^k\mc H^2(\Gamma_i)+1,\quad \forall t\in [-\delta_\epsilon,1].
\]

Then for each $t\in[1, 2+\delta_\epsilon]$, set
\[
    \Phi(t):=\bigcup_{i=1}^k\Big[ B_{\eta\xi(2-t)}(\ell_i(2-t))\bigcup\Big(\cup_{s\leq t} \partial B_{\eta \xi(s)}(\ell_i(s))\Big)\Big]\bigcup \Big[   \partial \mk B_\epsilon\setminus \bigcup_{i=1}^k\partial \wti B_i \Big].
\]
Indeed, $\Phi(t)$ ($t\in [1,2+\delta_\epsilon]$) is given by adding small cylinders and disks to $\wti S_0(-\delta_\epsilon)$. Clearly, we have that 
\[
    \mc H^2(\Phi(t)) \leq \mc H^2(\Gamma_0)+1,\quad  \forall t\in [1,2+\delta_\epsilon].
\]

Then one can continue the construction of $\Phi$ on $(2+\delta_\epsilon,3]$ with $\Phi(3)=S_0(1)$ being a point, and 
\[
    \mc H^2(\Phi(t)) \leq \mc H^2(\Gamma_0)+1, \quad \forall t\in (2+\delta_\epsilon,3]. 
\]

Overall, we have constructed a sweepout $\{\Phi(t)\}_{t\in[-\delta_\epsilon,3]}$ of $\mk B_\epsilon$ with 
\[  \mc H^2(\Phi(t))\leq  \sum_{i=0}^k\mc H^2(\Gamma_i)+1  < \mc H^2(T)+2, \quad \forall t\in [-\delta_\epsilon,3].\]

\medskip
{\noindent\em Step III: Applying Simon--Smith min-max theory.}
\smallskip

So far, together with the sweepout of $\wti N_\epsilon\setminus \mk B_\epsilon$ and $\mk B_\epsilon$, we have constructed sweepouts on $\wti N_\epsilon$ by spheres. Then by opening necks, we can construct a sweepout of $\wti N_\epsilon$ by disks with uniform area upper bounds. Applying Simon--Smith min-max theory \cite{Li-CPAM}, there exists an embedded free boundary minimal surfaces $(D_\epsilon,\partial D_\epsilon)$ in $(N_\epsilon,\partial N_\epsilon, g_\epsilon)$ with genus 0, index $\leq 1$, and uniform area upper bounds (independent of $\epsilon$).
Note that $\gamma_\epsilon (\Gamma_i\times \{t\})$ has mean curvature vector pointing towards $\Gamma_i$. Hence $D_\epsilon$ has to intersect $\gamma_\epsilon (\Gamma_i\times \{t\})$ for all $t\in[-\delta_\epsilon,0]$ if $D_\epsilon$ intersects $\Gamma_i$. Then by the monotonicity formula and area upper bounds, we conclude that $D_\epsilon$ does not intersect $\Gamma_i$ for all $i=0,1,\cdots k$ when $\epsilon$ is small enough. Equivalently, $(D_\epsilon ,\partial D_\epsilon)\subset (N_\epsilon,\partial_{rel}N,g_\epsilon)$ for all small $\epsilon$. By the topological control of the min-max minimal surfaces, we obtain that $D_\epsilon$ is a disk. Finally by the compactness of free boundary minimal surfaces and the argument of Song \cite{Song18}, $D_\epsilon$ subsequentially converges to an embedded  free boundary minimal disk or sphere in $(N\setminus T,g)$ as $\epsilon\to 0$. Recall that there is no embedded minimal sphere in $N\setminus T$. Hence the limit is the desired surface. This finishes the proof of Lemma \ref{lem:existence of fbmd}.
\end{proof}

\begin{theorem}\label{thm:always fbmd}
Suppose that $(M,g)$ is a Riemannian 3-ball with strictly mean convex boundary. Then there exists an embedded free boundary minimal disk. Moreover,
\begin{enumerate}
    \item either there is no closed immersed minimal surface; or 
    \item there exists an embedded stable minimal sphere.
\end{enumerate}
\end{theorem}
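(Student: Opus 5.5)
Following the overview in the introduction, the plan is to run mean curvature flow from $\partial M$ and then apply Lemma \ref{lem:existence of fbmd} to the region swept out. Since $\partial M$ is strictly mean convex, White's theory \cite{Whi00} for the (level set) mean curvature flow starting at $\partial M$ applies: the flow moves monotonically inward, and the swept-out region $\Omega=\bigcup_t M_t$ has a (possibly singular) foliation of its interior by strictly mean convex hypersurfaces. As $t\to\infty$, either $M_t$ becomes empty (extinction), or it converges to a nonempty compact set $K$ whose boundary $T:=\partial K\cap\interior(M)$ is a finite union of smooth embedded stable minimal surfaces. These are approached from the mean convex side, so they have contracting neighborhoods in the sense of Section \ref{subsec:cylindrical ends}. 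Because $M$ is a ball, every closed embedded surface in it is separating. We then set $N:=M\setminus\interior(K)$, the closure of the swept region, so that $\partial N=\partial M\cup T$ with $\partial_{rel}N=\partial M$.

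Next we show that every component of $T$ is a sphere and that $N$ is a ball with finitely many balls removed. The regions $M_t$ sweep out $N$, and I expect the topology of $N$ to be controlled by the fact that the flow only loses topology, through neck pinches and extinctions, in mean convex flow. I would try to argue that $\pi_1$ or $H_1(N)$ injects appropriately so that each boundary component of $K$ is a sphere in the ball. An alternative is to minimize area in the isotopy class of each component of $T$ inside $N$, as in the first paragraph of the proof of Lemma \ref{lem:existence of fbmd}; by Meeks--Simon--Yau this produces spheres. I expect the genus control of $T$ to be the main obstacle. If it fails directly, I would replace $K$ by the region bounded by isotopy-minimizing spheres obtained by $\gamma$-reduction of $T$, which still act as barriers.

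To finish, we verify the remaining hypothesis of Lemma \ref{lem:existence of fbmd}: there is no embedded minimal sphere, and in fact no closed minimal surface, in $N\setminus\partial N$. Suppose a closed minimal surface $S$ met the interior of $N$. Take the first time $t$ at which the foliating leaf touches $S$ from outside. The leaf is strictly mean convex, possibly as a viscosity barrier at singular points, and this contradicts the maximum principle. Lemma \ref{lem:existence of fbmd} therefore gives an embedded free boundary minimal disk in $N\subset M$. Its boundary lies on $\partial_{rel}N=\partial M$, so it is a free boundary minimal disk in $M$. In the extinction case $T=\emptyset$ and $N=M$, and the lemma applies with no cylindrical ends.

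For the dichotomy, there are two cases. If the flow goes extinct, the same maximum principle argument applied to the full foliation of $M$ shows that $M$ contains no closed immersed minimal surface, since the argument uses only touching and so also handles immersed surfaces; this is alternative (1). Otherwise $T\neq\emptyset$, and each of its components is an embedded stable minimal sphere by the previous step; this is alternative (2).
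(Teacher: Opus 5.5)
Your argument follows the paper's proof. You run the mean convex flow from $\partial M$, take $N$ to be the closure of the swept-out region with $T=\partial N\setminus\partial M$, and use the flow as a barrier to rule out closed minimal surfaces in $\interior(N)$. You then apply Lemma \ref{lem:existence of fbmd}, with $T=\emptyset$ in the extinction case, which the paper leaves implicit, and read off the dichotomy from extinction versus non-extinction. The one step that is not yet a proof is the one you flag yourself: that the components of $T$ are spheres and $N$ is a ball with balls removed. The paper does not argue this separately; it takes it as part of White's Theorem 11.1 (the flow ``either becomes extinct or converges to embedded minimal spheres'').

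Your first idea closes this step once it is aimed in the right direction. What you need is that the complement $W_t=M\setminus K_t$ never acquires new loops.
\begin{itemize}
\item For a mean convex flow, topology changes only at neck pinches and at extinctions of components. These attach $2$-handles and $3$-handles to $W_t$, so $\pi_1(W_s)\to\pi_1(W_t)$ is onto for $s<t$. This is White's topological monotonicity for mean convex flow.
\item Since $W_0$ is a collar of $\partial M\cong S^2$, the union $\bigcup_t W_t$ is simply connected, and hence so is $N$.
\item The half-lives-half-dies lemma, $\dim\ker\big(H_1(\partial N;\mathbb Q)\to H_1(N;\mathbb Q)\big)=\tfrac12\dim H_1(\partial N;\mathbb Q)$, then forces $H_1(\partial N;\mathbb Q)=0$. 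So every component of $T$ is a sphere.
\item By Alexander's theorem, each component of $T$ bounds a ball on the side away from $\partial M$, so $N$ is a ball with finitely many disjoint balls removed.
\end{itemize}

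You should drop the $\gamma$-reduction fallback. A region bounded by isotopy-minimizing spheres is no longer swept out by the flow. You would therefore lose exactly the hypothesis of Lemma \ref{lem:existence of fbmd} that $N\setminus\partial N$ contains no minimal sphere. Such spheres also need not have contracting neighborhoods. Similarly, minimizing area in the isotopy class of a component of $T$ tells you nothing about the genus of that component itself.
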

\begin{proof}
Consider the mean curvature flow starting at $\partial M$. Then by White \cite{Whi00}*{Theorem 11.1}, the weak solution will either be extinct or converge to embedded minimal spheres. If it becomes extinct in finite time, then it does not contain any closed immersed minimal surfaces. 

We now suppose that the weak mean curvature flow converges to embedded minimal spheres. Denote by $\Sigma$ the limit surface which is possibly disconnected. Denote by $N$ the connected component of $M\setminus \Sigma$ which contains $\partial M$. Consider the metric completion of $N$ w.r.t. $g$, still denoted by $(N, g)$. Note that $\Sigma$ has a {\em contracting neighborhood} $U$ in $N$. Clearly, there is no embedded minimal sphere in $N\setminus \partial N$. Then by Lemma \ref{lem:existence of fbmd}, there exists an embedded minimal disk in $N$ with free boundary on $\partial M$. This completes the proof of Theorem \ref{thm:always fbmd}.
\end{proof}

\subsection{A dichotomy in mean convex balls with bumpy metrics}
\begin{theorem}[cf. Theorem \ref{thm: main}]\label{thm:bumpy assumption}
Let $(M,g)$ be a Riemannian three-ball with strictly mean convex boundary such that there are no degenerate-and-stable minimal spheres or free boundary minimal disks. Then
\begin{enumerate}
    \item either there are three embedded minimal spheres; or 
    \item there are three embedded free boundary minimal disks.
\end{enumerate}
\end{theorem}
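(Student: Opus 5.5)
Assume, for contradiction, that $M$ contains at most two embedded minimal spheres and at most two embedded free boundary minimal disks. In particular there are only finitely many such surfaces, so Lemma~\ref{lem:LS theory} gives
\[ 0<\mf L(\ms P_1)<\mf L(\ms P_2)<\mf L(\ms P_3). \]
The idea is to realize each width by a surface of a specific type and count. For each $i$, choose a sweepout in $\ms P_i$ with near-optimal area (starting from $\Psi_i$) and apply Theorem~\ref{thm:classical M1} with $\Sigma_0=\mb D$. Since $\partial M$ is mean convex and $M$ is a ball, the genus and Betti number bounds force every component $\Gamma^{(i)}_j$ to be a free boundary minimal disk or a minimal sphere. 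There are no one-sided components, since an embedded one-sided surface in a ball would be nonorientable, which is excluded by the $\beta_1$ bound and the fact that $B^3$ contains no embedded $\mb{RP}^2$. Moreover, each unstable component has multiplicity one. This yields
\[ \mf L(\ms P_i)=\sum_j m^{(i)}_j \mc H^2(\Gamma^{(i)}_j). \]

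\emph{Handling stable components.} By hypothesis every stable sphere or disk is nondegenerate, so I would run the argument of \cite{Wang-Zhou23}*{Theorem A}. First, a stable component with higher multiplicity can be ruled out, or its contribution controlled, by a min--max in the complement. Concretely, if a strictly stable sphere appears, cut along it and use Song's cylindrical construction (Section~\ref{subsec:cylindrical ends}, as in Lemma~\ref{lem:existence of fbmd}); if a strictly stable disk appears, use the barrier construction of Appendix~\ref{appen:barries}. In either case one obtains, in a component of the complement, an index-$\le 1$ unstable min--max surface of strictly smaller width, or else a further new surface.

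\emph{Counting.} Distinct widths together with multiplicity one then force the three min--max configurations to involve pairwise distinct unstable surfaces. This is the Lusternik--Schnirelmann step: if $\mf L(\ms P_i)$ and $\mf L(\ms P_{i+1})$ were realized by the same finite collection, the LS inequality (Appendix~\ref{appen:LS inequality}) would force the widths to be equal, contradicting the strict inequalities. Hence at least three distinct embedded surfaces appear, each a sphere or a disk, and by pigeonhole at least two are of one type. To reach three of one type I would use Theorem~\ref{thm:always fbmd}. If no closed minimal surface exists, all three surfaces are disks, giving alternative (2). Otherwise there is a stable sphere $S$. Then I would apply Lemma~\ref{lem:existence of fbmd} and the sweepouts of the inner ball region bounded by $S$; these are sphere sweepouts whose widths, by the $S^3$ argument of \cite{Wang-Zhou23}, should produce additional spheres. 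The aim is to upgrade the mixed count to three spheres or three disks.

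\emph{Main obstacle.} I expect the hardest step to be this final upgrade from three surfaces of mixed type to three of a single type, that is, making the dichotomy genuinely exclusive. If the direct approach fails, I would fall back on a case split by the number of spheres $s\in\{0,1,2\}$. For $s=0$, all three surfaces are disks. For $s\ge1$, I would cut $M$ along the outermost stable sphere and rerun the three-width argument in the exterior region $N$, with cylindrical ends as in Lemma~\ref{lem:existence of fbmd}, where no interior spheres survive. That would yield three disks via the Lusternik--Schnirelmann inequalities in $N$, provided the uniform width bounds and Lemma~\ref{lem:LS theory} transfer to $(N_\epsilon,g_\epsilon)$.
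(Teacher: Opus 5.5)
There is a genuine gap, and it sits in your ``Counting'' step. Distinct widths, together with multiplicity one for unstable components, do not force three distinct surfaces. The reason is that the min--max varifold for $\ms P_i$ may have several components, and stable components may carry arbitrary multiplicity. For instance, two disjoint surfaces $A,B$ with $\mc H^2(A)<\mc H^2(B)$ could realize $\mf L(\ms P_1)=\mc H^2(A)$, $\mf L(\ms P_2)=\mc H^2(B)$ and $\mf L(\ms P_3)=\mc H^2(A)+\mc H^2(B)$. This does not contradict Lemma \ref{lem:LS theory}, because the LS inequality only compares widths; it says nothing about distinct surfaces unless each width is the area of a single connected multiplicity-one surface. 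Your paragraph on stable components does not supply this. Stable components cannot in general be ``ruled out,'' and the barrier/cylindrical min--max there is described only vaguely. The missing idea is the intersection property used in the paper: if $M$ contains no closed minimal surface and no stable free boundary minimal disk, then any two embedded free boundary minimal disks intersect, since otherwise minimizing area in the region between them yields a stable one. The components of the min--max limit are pairwise disjoint, so each $\mf L(\ms P_i)$ is the area of a single unstable disk with multiplicity one. The strict inequalities of Lemma \ref{lem:LS theory} then give three distinct disks.

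Second, the ``upgrade'' you flag as the main obstacle is unnecessary once the cases are organized correctly. Your fallback, re-running the three-width LS argument in the exterior region with cylindrical ends, also rests on transferring Lemma \ref{lem:LS theory} and uniform width bounds to $(N_\epsilon,g_\epsilon)$, which is not available. The paper instead lets Theorem \ref{thm:always fbmd} choose the alternative at the outset.

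If there is a stable minimal sphere $S$, it is strictly stable by hypothesis. Applying \cite{Wang-Zhou23} to the ball bounded by $S$ gives two unstable minimal spheres inside it, and together with $S$ this is alternative (1) outright. You gestured at this, but did not see that it closes the case by itself, with no counting of the $\ms P_i$ widths.

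If there is no closed minimal surface, all min--max components are disks, and one splits into two subcases. With no stable disk, use the intersection argument above. With a stable disk $\Gamma$, it is strictly stable and separates $M$ into two components. Theorem \ref{thm:free boundary local min-max} then produces an unstable free boundary minimal disk in each component, and these together with $\Gamma$ give three disks, again without any width comparison.
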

\begin{proof}[Proof of Theorem \ref{thm:bumpy assumption}]
By Theorem \ref{thm:always fbmd}, either there is no closed immersed minimal surface in $(M,g)$, or there exists an embedded stable minimal sphere.

Suppose that there exists an embedded stable minimal surface $\Sigma$. By assumption, $\Sigma$ is strictly stable. Denote by $N$ the metric completion of the ball bounded by $\Sigma$ in $M$. Then $(N,g)$ is a Riemannian ball whose boundary is a strictly stable minimal sphere. By Wang-Zhou \cite{Wang-Zhou23}, there are at least two embedded unstable minimal spheres in $N\setminus \partial N$. Hence there are three embedded minimal spheres in $(M,g)$.

Now we suppose that $(M,g)$ does not contain immersed closed minimal surfaces. We divide the proof into two cases.

\medskip 
{\noindent\em Case I: $(M,g)$ does not contain stable free boundary minimal disks.}

\smallskip
Then any two embedded free boundary minimal disks must intersect; otherwise, minimizing area in the region bounded between them would produce a stable free boundary minimal disk, contradicting our assumption. By Simon--Smith min-max theory and the multiplicity one theorem, there exists an embedded free boundary minimal disk $\Sigma_i$ such that 
\[
   \mf L(\ms P_i)=\Area(\Sigma_i),
\]
where $\mf L(\ms P_i)$ is defined in Section \ref{subsec:three sweepouts}.
Without loss of generality, we also assume that there are finitely many embedded free boundary minimal disks. Then by Lemma \ref{lem:LS theory}, 
\[
    \mf L(\ms P_1)<\mf L(\ms P_2)<\mf L(\ms P_3).
\]
Hence $\Sigma_1,\Sigma_2,\Sigma_3$ are different embedded free boundary minimal disks.

\medskip
{\noindent\em Case II: $(M,g)$ contains stable free boundary minimal disks.}

\smallskip
Denote by $\Gamma$ the stable free boundary minimal disk. By assumption, $\Gamma$ is strictly stable. Note that $\Gamma$ separates $M$ into two connected components. By Theorem \ref{thm:free boundary local min-max}, there exists an unstable free boundary minimal disk in $(M,\partial M,g)$ in each connected component. This finishes the proof.
\end{proof}

\begin{remark}
In Case I, we have proved that if $(M,g)$ does not contain any embedded stable free boundary minimal disks or minimal spheres, then $(M,g)$ admits at least three distinct embedded free boundary minimal disks.
\end{remark}

\begin{comment}
\begin{theorem}
Let $(M,g)$ be a Riemannian 3-ball with strictly mean convex boundary. Suppose that $(M,g)$ does not contain any degenerate-and-stable free boundary minimal disks. Then 
\begin{enumerate}
    \item either there are two embedded minimal spheres and one free boundary minimal disks; or
    \item there are three embedded free boundary minimal disks.
\end{enumerate}
\end{theorem}
\begin{proof}

\end{proof}
\end{comment}

\appendix

\section{Minimal disks in manifold with barriers}\label{appen:barries}

In this section, we assume that  $N:=\{(x,y,z)\in \mb R^3;x^2+y^2+z^2\leq 1; z\geq 0\}$ is a half-ball whose boundary has two smooth parts
\[ \partial_{rel}N:=\{(x,y,z)\in \mb R^3;x^2+y^2+z^2=1; z\geq 0\}; \quad T:=\{x^2+y^2\leq 1; z=0\}.\]
Let $g$ be a smooth metric on $B^3$. We restrict the metric $g$ to $N$.

Now we suppose that $T$ is a strictly stable free boundary minimal surface in $(N,\partial_{rel}N)$. 
Then by \cite{Wang_21_2}*{Section 3.1}, there exist $\mu>0$, and a diffeomorphism $\varphi: T\times [0,\mu)\to N$ such that 
\begin{itemize}
    \item $\varphi$ is a diffeomorphism to its image;
    \item $\varphi(x,0)=x$ for all $x\in T$, and $\varphi(T,s)$ is a free boundary surface in $(M,\partial_{rel}M)$ with mean curvature vector pointing towards $T$;
    \item for all $t\in [0,\mu)$,
    \[    \varphi_*\left(\frac{\partial}{\partial t}\right)= \frac{\nabla t}{|\nabla t|^2}.\]
\end{itemize}
 Clearly, there exists a positive smooth function $f$ on $\varphi(T\times [0,\mu])$ so that the metric $g$ can be written as
\begin{equation}\label{eq:decompose metric}
g=g_t(q)\oplus (f(q)\mr dt)^2, \ \ \ \forall q\in \varphi(T\times\{t\}). 
\end{equation}
Here $g_t=g\llcorner \varphi(T\times \{t\})$ is the restricted metric and it can be extend to define a 2-form over $TN$.

Note that $\varphi:T\times \{0\}\rightarrow T$ is the canonical identifying map. Define the following non-compact manifold with cylindrical ends:
\[\ms C(N):=N\cup_{\varphi}(T\times [0,+\infty)).\]
We endow it with the metric $h$ such that $h=g$ on $N$ and 
\begin{equation}\label{eq:def of h}
h=g\llcorner T\oplus (f_0dt)^2
\end{equation}
on $T\times [0,+\infty)$. Here $g\llcorner T$ is the restriction of $g$ to the tangent bundle of $T$ and 
\[f_0(x,t)=f(\varphi(x,0))=\phi(\varphi(x));\]
see \eqref{eq:decompose metric} for the definition of $f$. We remark that under the metric $h$, each slice $T\times\{t\}$ is totally geodesic.

 Now for any $\epsilon<\mu$, define on $N$ the following metric $h_\epsilon$:
\[
h_\epsilon(q):=
\left\{
\begin{array}{ll}
g_t(q)\oplus(\vartheta_\epsilon(t)f(q)dt)^2 &\text{\ for \ } q\in  \varphi(T\times[0, \epsilon])\\
g(q) &\text{\ for \ } q\in N\setminus F(T\times [0, \epsilon]).
\end{array} 
\right.
\]
Here $\vartheta_\epsilon$ is chosen to be a smooth function on $[0,\epsilon]$ so that
\begin{itemize}
\item $1\leq \vartheta_\epsilon$ and $\frac{\partial}{\partial t}\vartheta_\epsilon\leq 0$;
\item $\vartheta_\epsilon\equiv1$ in a neighborhood of $\epsilon$;
\item $\lim_{\epsilon\rightarrow0}\int_{\epsilon/2}^\epsilon\vartheta_{\epsilon}=+\infty$;
%\item $\lim_{\epsilon\rightarrow0}\int_{z_\epsilon}^\epsilon\vartheta_\epsilon=0$.
\end{itemize}

By \cite{Wang_21_2}*{Lemma 3.2}, with the differential structure associated with $h$ on $N$ and $T\times (0,+\infty)$, $\ms C(N)$ is a $C^1$ manifold with boundary. Moreover, the metric is Lipschitz continuous. 
For simplicity, denote by
\[
    N_\epsilon:=  N\setminus \varphi(T\times [0,\epsilon/2)),
\] 
and 
\[
    T_\epsilon:= \varphi(T\times \{\epsilon/2\}), \quad \partial_{rel}N_\epsilon :=\partial N\setminus \varphi (\partial T\times [0,\epsilon/2)).
\]
Clearly, $(N_\epsilon,\partial_{rel}N_\epsilon \cup T_\epsilon,g_\epsilon)$ is a compact manifold with boundary $\partial_{rel}N_\epsilon \cup T_\epsilon$ such that $T_\epsilon$ meets $\partial_{rel}N_\epsilon$ orthogonally.
Then by A. Song \cite{Song18}*{Section 2.2} (see \cite{Wang_21_2}*{Section 3.1} for free boundary versions), $(N_\epsilon, h_\epsilon)$ approaches $(\ms C(N),h)$ in appropriate sense.
In particular, 
\begin{enumerate}[label=(\roman*)]
    \item $h_\epsilon = g$ in $N\setminus \varphi(T\times [0,\mu))$;
    \item for $t \in (0,\mu)$, the slices $\varphi(T\times  \{t\})$ have non-zero mean curvature vector pointing towards $T$ with respect to the new metric $h_\epsilon$.
\end{enumerate}

\begin{theorem}\label{thm:free boundary local min-max}
    Let $(N,\partial_{rel}N \cup  T,g)$ be a compact manifold with boundary $\partial_{rel}N\cup T$ such that $T$ meets $\partial_{rel}N$ orthogonally. Suppose that $T$ is a stable free boundary minimal disk in $(N,\partial_{rel}N)$ with contracting (one-sided) neighborhood and $\partial_{rel}N$ is mean convex. Then there exists an embedded free boundary minimal disk $(D,\partial D)\subset (N\setminus T,\partial_{rel}N)$. 
\end{theorem}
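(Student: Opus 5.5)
The plan is to mimic the proof of Lemma \ref{lem:existence of fbmd}, with the free boundary cylindrical-end construction set up just above in place of Song's closed one. We work with the approximating compact manifolds $(N_\epsilon,h_\epsilon)$. Their boundary consists of $\partial_{rel}N_\epsilon$, which is mean convex (we may assume this after a small perturbation near $\partial T$, since $h_\epsilon=g$ away from the collar and the collar slices meet $\partial_{rel}N$ orthogonally), and the barrier piece $T_\epsilon$. The barrier piece sits inside a collar foliated by free boundary slices $\varphi(T\times\{t\})$ whose mean curvature vectors point toward $T$. The aim is a free boundary min--max disk $D_\epsilon$ in $N_\epsilon$ that avoids the collar and whose area is bounded independently of $\epsilon$. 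We then let $\epsilon\to 0$.

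\emph{Step 1 (sweepout with uniform bound).} Treat $T_\epsilon$ as part of $\partial_{other}$ and take $\widetilde{N_\epsilon}$ to be $N_\epsilon$ with a half-ball glued along $T_\epsilon$; then $\partial\widetilde{N_\epsilon}$ is the mean convex part. We use the outer min--max of Theorem \ref{thm:relative min-max in any compact domain} with $h=0$. The sweepout consists of the collar slices $\varphi(T\times\{t\})$ for $t\in[\epsilon/2,\mu]$, followed by a fixed sweepout by disks of $N\setminus\varphi(T\times[0,\mu))$ that ends at a point of $\partial_{rel}N$. Along the collar the slice areas equal $\mathcal H^2(\varphi(T\times\{t\});g)$, because $h_\epsilon$ changes only the $dt$ direction. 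So every surface in the sweepout has area at most a constant $C$ independent of $\epsilon$. The width is positive, by isoperimetric considerations for relative cycles. It should also be strictly larger than $\mathcal H^2(T_\epsilon)$, which is close to $\mathcal H^2(T)$; this follows from the strict stability of $T$ together with the stretching of the collar. Theorem \ref{thm:rel_mult_one}, with its topological bound, then gives a free boundary min--max limit $\Gamma_\epsilon$ of genus zero with $\beta_1\le 0$. Every component is therefore a disk or a sphere, and the total area is at most $C$.

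\emph{Step 2 (avoiding the barrier).} The slices $\varphi(T\times\{t\})$ with $t\in[\epsilon/2,\epsilon]$ have mean curvature pointing toward $T$, so the maximum principle (free boundary version) applies. If $\Gamma_\epsilon$ enters the collar, its deepest slice of contact gives a contradiction, unless $\Gamma_\epsilon$ crosses the whole stretched region $\varphi(T\times[\epsilon/2,\epsilon])$. That region has $h_\epsilon$-length $\int\vartheta_\epsilon\to\infty$. Applying the monotonicity formula (boundary version near $\partial_{rel}N$) at a sequence of points along such a crossing makes the area tend to infinity, which contradicts the bound $C$. Hence $\Gamma_\epsilon\subset N\setminus\varphi(T\times[0,\epsilon))$, where $h_\epsilon=g$, for small $\epsilon$. \textbf{This is the step I expect to be hardest.} The difficulty is making the ``crossing'' argument rigorous near the corner $\partial T$ where the slices meet $\partial_{rel}N$, and excluding the case that $\Gamma_\epsilon$ is a copy of a slice. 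The latter is excluded because slices are not minimal and the width exceeds $\mathcal H^2(T)$.

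\emph{Step 3 (limit).} Send $\epsilon\to0$. By Property $\mathbf{(R')}$ and Theorem \ref{thm:compactness and jacobi fields}, $\Gamma_\epsilon$ converges to a free boundary minimal surface $\Gamma$ in $(N\setminus T,\partial_{rel}N)$ with positive area. It stays away from $T$ by the contracting neighbourhood and the maximum principle. Its components are disks or spheres. Since $N$ is a half-ball in a mean convex ball (in the application, $M$ has no closed minimal surfaces), closed components can be ruled out. Otherwise, if a sphere appears, we argue as in Theorem \ref{thm:always fbmd}, working inside the region containing $\partial_{rel}N$. Either way we obtain the desired embedded free boundary minimal disk $D\subset N\setminus T$.
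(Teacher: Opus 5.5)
Your architecture matches the paper's proof: stretched collar metrics $h_\epsilon$, a half-ball glued along $T_\epsilon$, and outer min--max over disk sweepouts with an $\epsilon$-independent area bound. The barrier step is also the paper's (maximum principle plus monotonicity), as are the limit $\epsilon\to 0$ and the final appeal to topological control. The paper uses a different min--max setup: it doubles $\wti N_\epsilon$ and applies Li's outer min--max, rather than the mixed framework of Theorem \ref{thm:relative min-max in any compact domain}. That framework assumes $\partial_{mc}$ and $\partial_{other}$ are unions of boundary components, not pieces meeting along the corner $\partial T_\epsilon$. For the same reason, the regularity and topological bound of Theorem \ref{thm:rel_mult_one} that you invoke in Step 1 are only available once you know the min--max surface avoids that corner.

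The genuine gap is in Step 2, and it carries into Step 3.
\begin{itemize}
\item \emph{The maximum principle points the wrong way.} The slices' mean curvature vectors point toward $T$, so a minimal surface cannot touch a slice from the $T$-side. At the deepest slice of contact (smallest $t$), $\Gamma_\epsilon$ lies on the far side, and there is no contradiction.
\item \emph{What the argument actually gives.} Looking at the largest $t$ shows that a connected $\Gamma_\epsilon$ cannot be confined to a part of the collar adjacent to $T_\epsilon$. So if $\Gamma_\epsilon$ touches $T_\epsilon$ or the corner $\partial T_\epsilon$, it must cross the whole stretched region, which monotonicity forbids. This is exactly the paper's Claim. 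It does not stop $\Gamma_\epsilon$ from dipping into $\varphi(T\times(\epsilon/2,\epsilon))$ to bounded $h_\epsilon$-depth.
\item \emph{Consequence for Step 3.} You do not get $\Gamma_\epsilon\subset N\setminus\varphi(T\times[0,\epsilon))$ at this stage. Step 3 therefore cannot apply Theorem \ref{thm:compactness and jacobi fields} with the fixed metric $g$. The $\Gamma_\epsilon$ are minimal for $h_\epsilon$, and $h_\epsilon$ converges only to the Lipschitz cylindrical metric on $\ms C(N)$.
\end{itemize}

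The paper proceeds in the opposite order. After the Claim, it passes to the limit in $\ms C(N)$ following \cite{Wang_21_2}. In the end the slices are totally geodesic, so a compact limit entering the end must be a slice. This gives a limit in $(N\setminus T,g)$, and only then does compactness show that $V_\epsilon$ avoids $\varphi(T\times[0,\epsilon])$.

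To close that step you must rule out a limit equal to $T$ or to an end slice, possibly with multiplicity. Your statement that the width exceeds $\mc H^2(T)$ is not proved. Even granted, it only excludes multiplicity one, and since $T$ is stable, Theorem \ref{thm:compactness and jacobi fields} allows higher multiplicity. Likewise, ``stays away from $T$ by the contracting neighbourhood'' only gives, via the strong maximum principle, that the limit either equals $T$ or is disjoint from it.

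Your fallback for sphere components also needs work. Lemma \ref{lem:existence of fbmd} and Theorem \ref{thm:always fbmd} have no free boundary disk barrier, so arguing ``as in Theorem \ref{thm:always fbmd}'' would require a new construction. The paper itself only invokes topological control at this point.
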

\begin{proof}
Consider $\ms C(N)$ the non-compact $C^1$ manifold (endowed with the metric $h$ which is defined separately on $N$ and ends as above) by adding cylindrical ends to $T$ as above. Then there exists $(N_\epsilon,h_\epsilon)$ converges to $(\ms C(N),h)$ in appropriate sense.

Moreover, one can fill-in a Riemannian half-ball $B^+$ by identifying the half-sphere with $T_\epsilon$ such that $B^+$ admits a sweepout $\{S_i(t)\}_{0\leq t\leq 1}$ of free boundary disks with 
\[
    \mc H^2(S_i(t))\leq \mc H^2(S_i(0))=\mc H^2(T_\epsilon).
\]
Denote by 
\[
    \wti N_\epsilon= N_\epsilon \cup B^+, 
\]
endowed with a metric $\wti h_\epsilon$ defined by combining the metrics on $B^+$ and $h_\epsilon$ on $N_\epsilon$. Note that $\wti N_\epsilon$ is a ball with smooth boundary and $(\wti N_\epsilon,\wti h_\epsilon)$ also converges to $(\ms C(N),g)$ in appropriate sense.

Denote by 
\[ 
    \wti B:= B\cup \varphi(T\times [\epsilon/2,\epsilon))\subset \wti N_\epsilon.
\]
The sweepout of $B^+$ and the foliation of $\varphi (T\times [0,\epsilon)$ give a sweepout of $\wti B$ with uniform area upper bounds (independent of $\epsilon$). Note that $\wti h_\epsilon= g$ on $\wti N_\epsilon\setminus \wti B$. Then there exists a sweepout of $\wti N_\epsilon\setminus \wti B$ from $\varphi(T\times \{\epsilon\})$. These facts combined imply that there exists a sweepout $\{\Phi(t)\}_{t\in[0,1]}$ of $(\wti N_\epsilon,\wti h_\epsilon)$ by disks (and points as degenerate disks) with uniform area upper bounds. Recall that $\wti N_\epsilon$ is a ball. Then we can double it to obtain a sphere $S$ and construct a sweepout $\{ \wti \Phi(t) \}_{t\in[0,1]}$ of $S$ by spheres such that 
\[    \wti\Phi(t)\cap \wti N_\epsilon = \Phi(t).\]
Denote by $\Pi$ the homotopy class that contains $\wti \Phi$.
Define
\[
W:= \inf_{\Psi\in \Pi} \sup_{t\in(0,1)} \mc H^2(\Psi(t)\cap N_\epsilon).
\]
Then by the work of Li \cite{Li-CPAM}, there exists an integral varifold $V_\epsilon$ such that $W=\|V_\epsilon\|(N_\epsilon)$ and $V_\epsilon$ is a smooth free boundary minimal surface outside $\overline {B^+}$. Moreover, 

\begin{claim}
$V_\epsilon$ does not intersect $\partial T_\epsilon$ for all sufficiently small $\epsilon$.
\end{claim}
\begin{proof}   
Suppose not, by \cite{Wang_21_2}*{Lemma 2.13}, we can prove that the support of $V$ will intersect $U \setminus \varphi(T\times \{\epsilon/2\})$ for any relative open set $U\subset N_\epsilon$ that contains $\partial T_\epsilon$. By the maximum principle, $V_\epsilon$ also has to intersect $F (T \times\{t\})$. Note that $\mf M(V_\epsilon) $ is uniformly bounded from above. This contradicts the monotonicity formula \citelist{\cite{GLZ16}*{Theorem 3.4} \cite{Si}*{\S 17.6}}.
\end{proof}

Hence the regularity theorem gives that $V_\epsilon$ is induced by smooth free boundary minimal surfaces. Then by the same argument as in \cite{Wang_21_2}*{Theorem 3.10}, one can prove that $V_\epsilon$ subsequently converges to a smooth free minimal surface in $(N\setminus T,g)$. By the compactness of free boundary minimal hypersurfaces, we then obtain that $V_\epsilon$ is a smooth minimal surface in $N\setminus \varphi(T\times [0,\epsilon])$. By the topological control, we conclude that $V_\epsilon$ is a free boundary minimal disk in $(N\setminus T,\partial_{rel}N)$.
\end{proof}

\section{Proof of the Lusternik–Schnirelmann inequality}\label{appen:LS inequality}

Consider the fibration
\[
  \mr{Emb}(D^2,B^3\mr{rel}\,S^1)\to \mr{Emb}(D^2,B^3)\to \mr{Emb}(S^1,S^2).
\]
By Smale's conjecture proved by Hatcher \cite{Hat}*{Statement (5) in Appendix}, $\mr{Emb}(D^2,B^3\mr{rel}S^1)$ is contractible. By Smale, $\mr{Emb}(S^1,S^2)\simeq \mb{RP}^2$. Therefore, one concludes that 
$\mr{Emb}(D^2,B^3)$ is homotopic to $\mb{RP}^2$.

\begin{proof}[Proof of Lemma \ref{lem:LS theory}]
We prove the last inequality and the others are similar. 
By the definition of $\mf L(\ms P_3)$, there exists a sequence of $\{\Phi_i:(X_i,Z_i)\to (\oli{\ms X},\ms Y)\}\subset \ms P_3$ such that 
\[   
\mf L(\ms P_3)=\lim_{i\to\infty}\max_{x\in X_i} \Area(\Phi_i(x)).
\]
Denote by $\mc S$ the collection of integral varifolds, with mass equal to $\mf L(\ms P_3)$, whose supports are embedded minimal spheres or embedded free boundary minimal disks.
Given $\eta_1>0$, define 
\begin{gather*}
    Y_i:=\{x\in X_i:\,\mf F(|\Phi_i(x)|,\mc S)\geq \eta_1\};\quad      K_i:=\oli{X_i\setminus Y_i}.
\end{gather*}
Note that $K_i\subset\interior(X_i)$ for small enough $\eta_1$. In particular, $K_i\cap Z_i=\emptyset$. Denote by $\iota_1:K_i\to X_i$ and $\iota_2:Y_i\to X_i$ the two natural inclusion maps. Since $\Phi_i\in \ms P_3$, there exists $\bar\lambda\in H^1(\oli{\ms X},\ms Y;\mb Z_2)$ such that $[\Phi_i^*(\bar\lambda)]^3\neq 0\in H^3(X_i,Z_i;\mb Z_2)$. Observe that the following diagram
\[
\begin{tikzcd}
    & H^1(\oli{\ms X},\ms Y;\mb Z_2)\arrow[r,"\wti j^*"] \arrow[d,"\Phi_i^*"]
        & H^1(\oli{\ms X};\mb Z_2)\arrow[d,"\wti{ \Phi_i^*}"]\\
    H^1(X_i,K_i\cup Z_i;\mb Z_2)\arrow[r,"j_1^*"] 
        & H^1(X_i,Z_i;\mb Z_2)\arrow[r,"\iota^*_1"]
            & H^1(K_i\cup Z_i,Z_i;\mb Z_2)\rlap{$\simeq H^1(K_i;\mb Z_2)$}
\end{tikzcd}\hspace{2.4cm}
\]
is commutative. One can take 
$\eta_1$ small enough so that
\[
\wti {\Phi_i^*}(H^1(\oli{\ms X};\mb Z_2))=\{0\}.
\]
To see this, consider the chain of maps
$
\begin{tikzcd}
 K_i \arrow[r, "\iota_1"]
& \interior(X_i) \arrow[r, "\Phi_i"]
& \ms X \arrow[r, "\iota"]
&\mc Z_2(M,\partial M; \mb Z_2),
\end{tikzcd}
$
where $\iota: \ms X\to \mc Z_2(M,\partial M; \mb Z_2)$ is the natural inclusion map. By the argument in \cite{MN17}*{Section 6} and our choice of $K_i$, we know that $(\iota\circ\Phi_i\circ\iota_1)^*: H^1(\mc Z_2(M,\partial M; \mb Z_2); \mb Z_2)\to H^1(K_i; \mb Z_2)$ is trivial for small enough $\eta_1$. By the fact that $\ms X$ is homotopic to $\mb{RP}^2$, we also know that $\iota^*: H^1(\mc Z_2(M,\partial M; \mb Z_2);\mb Z_2) \to H^1(\ms X; \mb Z_2)$ is an isomorphism. Together, these imply that $(\Phi_i\circ\iota_1)^*: H^1(\ms X;\mb Z_2)\to H^1(K_i;\mb Z_2)$ is trivial, and this implies $\wti {\Phi_i^*}(H^1(\oli{\ms X};\mb Z_2))=\{0\}$.

It then follows that 
\begin{equation*}
    \iota_1^*\circ \Phi_i^*(\bar\lambda)=0.
\end{equation*}
Note that the sequence in the second line is exact. Hence there exists $\alpha\in H^1(X_i,K_i\cup Z_i;\mb Z_2)$ such that $j_1^*(\alpha)=\Phi_i^*(\bar\lambda)$. On the other hand, the following sequence 
\[   
\begin{tikzcd}
H^2(X_i,Y_i;\mb Z_2)\arrow[r,"j_2^*"] 
        &H^2(X_i,Z_i;\mb Z_2)\arrow[r,"\iota^*_2"]
            &H^2(Y_i,Z_i;\mb Z_2)
\end{tikzcd}
\]
is also exact.

Note that $Y_i\cup K_i=X_i$. Consider the diagram
\[
\begin{tikzcd}
    H^1(X_i,K_i \cup Z_i;\mb Z_2) \arrow[d,"j_1^*"] \arrow[r, phantom, "\times"] & H^2(X_i,Y_i;\mb Z_2) \arrow[r,"\smile"] \arrow[d,"j_2^*"] & H^3(X_i,X_i;\mb Z_2) \arrow[d, "j_3^*"]\\
    H^1(X_i,Z_i;\mb Z_2) \arrow[r, phantom, "\times"] & H^2(X_i,Z_i;\mb Z_2) \arrow[r,"\smile"] & H^3(X_i,Z_i;\mb Z_2).
\end{tikzcd}
\]
By the naturality, then we have
\[ j_1^*\big(H^1(X_i,K_i\cup Z_i;\mb Z_2)\big)\smile j_2^*\big(H^2(X_i,Y_i;\mb Z_2)\big)\subset j_3^*(H^3(X_i,X_i;\mb Z_2))=\{0\}.\] 
Together with the fact that 
\[ j_1^*(\alpha)\smile [\Phi_i^*(\bar\lambda)]^2=\Phi_i^*(\bar\lambda)\smile [\Phi_i^*(\bar\lambda)]^2\neq 0,\]
we then conclude that 
\[ [\Phi_i^*(\bar\lambda)]^2\notin \mr{Im} j_2^*=\ker \iota_2^*;\]
that is, $\iota_2^*[\Phi_i^*(\bar\lambda)]^2\neq 0\in H^3(Y_i,Z_i;\mb Z_2)$. Hence we have that $\{\Phi_i:(Y_i,Z_i)\to (\oli{\ms X},\ms Y)\}\subset \ms P_2$. By the definition of $Y_i$, then after the tightening process, we can deform $\Phi_i|_{Y_i}$ to $\Phi'_i$ such that for all sufficiently large $i$,
\begin{equation*}
\sup_{x\in Y_i}\Area(\Phi_i(x))< \mf L(\ms P_3)-\epsilon,
\end{equation*}
which implies that
%finally have that 
\[ \mf L(\ms P_2)<\mf L(\ms P_3).\]
This completes the proof.
\end{proof}

\bibliographystyle{amsalpha}
\bibliography{three_disks}
\end{document}